\documentclass[12pt,reqno]{amsart}
\usepackage[colorlinks=true, pdfstartview=FitV, linkcolor=blue, citecolor=blue, urlcolor=blue]{hyperref}
\usepackage{amssymb,amsmath, amscd}
\usepackage{ragged2e}
\usepackage{times, verbatim}
\usepackage{graphicx}
\usepackage[english]{babel}
 \usepackage[usenames, dvipsnames]{color}
\usepackage{amsmath,amssymb,amsfonts}
\usepackage{enumerate}
\usepackage{MnSymbol}
\usepackage{anysize}
\usepackage{enumitem}
\usepackage{bigints}
\usepackage{braket}
\usepackage[english]{babel}
\usepackage{blindtext}
\usepackage{mathtools}
\usepackage{graphicx}
\usepackage[a4paper,verbose]{geometry}
\marginsize{2.5cm}{2.5cm}{2.5cm}{2.5cm}
\input xy
\xyoption{all}
\usepackage{pb-diagram}
\usepackage[all]{xy}
\input xy
\xyoption{all}

\DeclareFontFamily{OT1}{rsfs}{}
\DeclareFontShape{OT1}{rsfs}{n}{it}{<-> rsfs10}{}
\DeclareMathAlphabet{\mathscr}{OT1}{rsfs}{n}{it}

\begin{document}
\theoremstyle{plain}

\newtheorem{theorem}{Theorem}[section]
\newtheorem{thm}[equation]{Theorem}
\newtheorem{prop}[equation]{Proposition}
\newtheorem{corollary}[equation]{Corollary}
\newtheorem{conj}[equation]{Conjecture}
\newtheorem{lemma}[equation]{Lemma}
\newtheorem{defn}[equation]{Definition}
\newtheorem{question}[equation]{Question}

\theoremstyle{definition}
\newtheorem{conjecture}[theorem]{Conjecture}

\newtheorem{example}[equation]{Example}
\numberwithin{equation}{section}

\newtheorem{remark}[equation]{Remark}

\newcommand{\Hecke}{\mathcal{H}}
\newcommand{\Liea}{\mathfrak{a}}
\newcommand{\Cmg}{C_{\mathrm{mg}}}
\newcommand{\Cinftyumg}{C^{\infty}_{\mathrm{umg}}}
\newcommand{\Cfd}{C_{\mathrm{fd}}}
\newcommand{\Cinftyfd}{C^{\infty}_{\mathrm{ufd}}}
\newcommand{\sspace}{\Gamma \backslash G}
\newcommand{\PP}{\mathcal{P}}
\newcommand{\bfP}{\mathbf{P}}
\newcommand{\bfQ}{\mathbf{Q}}
\newcommand{\Siegel}{\mathfrak{S}}
\newcommand{\g}{\mathfrak{g}}
\newcommand{\A}{\mathbb{A}}
\newcommand{\Q}{\mathbb{Q}}
\newcommand{\Gm}{\mathbb{G}_m}
\newcommand{\Nm}{\mathbb{N}m}
\newcommand{\ii}{\mathfrak{i}}
\newcommand{\II}{\mathfrak{I}}

\newcommand{\kk}{\mathfrak{k}}
\newcommand{\nn}{\mathfrak{n}}
\newcommand{\tF}{\widetilde{F}}
\newcommand{\p}{\mathfrak{p}}
\newcommand{\m}{\mathfrak{m}}
\newcommand{\bb}{\mathfrak{b}}
\newcommand{\Ad}{{\rm Ad}\,}
\newcommand{\ttt}{\mathfrak{t}}
\newcommand{\frakt}{\mathfrak{t}}
\newcommand{\U}{\mathcal{U}}
\newcommand{\Z}{\mathbb{Z}}
\newcommand{\bfG}{\mathbf{G}}
\newcommand{\bfT}{\mathbf{T}}
\newcommand{\R}{\mathbb{R}}
\newcommand{\ST}{\mathbb{S}}
\newcommand{\h}{\mathfrak{h}}
\newcommand{\bC}{\mathbb{C}}
\newcommand{\C}{\mathbb{C}}
\newcommand{\F}{\mathbb{F}}
\newcommand{\N}{\mathbb{N}}
\newcommand{\qH}{\mathbb {H}}
\newcommand{\temp}{{\rm temp}}
\newcommand{\Hom}{{\rm Hom}}
\newcommand{\Aut}{{\rm Aut}}
\newcommand{\rk}{{\rm rk}}
\newcommand{\Ext}{{\rm Ext}}
\newcommand{\End}{{\rm End}\,}
\newcommand{\Ind}{{\rm Ind}}
\newcommand{\ind}{{\rm ind}}
\newcommand{\Irr}{{\rm Irr}}
\def\circG{{\,^\circ G}}
\def\M{{\rm M}}
\def\diag{{\rm diag}}
\def\Ad{{\rm Ad}}
\def\As{{\rm As}}
\def\wG{{\widehat G}}
\def\G{{\rm G}}
\def\SL{{\rm SL}}
\def\PSL{{\rm PSL}}
\def\GSp{{\rm GSp}}
\def\PGSp{{\rm PGSp}}
\def\Sp{{\rm Sp}}
\def\St{{\rm St}}
\def\GU{{\rm GU}}
\def\SU{{\rm SU}}
\def\U{{\rm U}}
\def\GO{{\rm GO}}
\def\GL{{\rm GL}}
\def\PGL{{\rm PGL}}
\def\GSO{{\rm GSO}}
\def\GSpin{{\rm GSpin}}
\def\GSp{{\rm GSp}}

\def\Gal{{\rm Gal}}
\def\SO{{\rm SO}}
\def\O{{\rm  O}}
\def\Sym{{\rm Sym}}
\def\sym{{\rm sym}}
\def\St{{\rm St}}
\def\Sp{{\rm Sp}}
\def\tr{{\rm tr\,}}
\def\ad{{\rm ad\, }}
\def\Ad{{\rm Ad\, }}
\def\rank{{\rm rank\,}}

\def\Ext{{\rm Ext}}
\def\Hom{{\rm Hom}}
\def\Alg{{\rm Alg}}
\def\GL{{\rm GL}}
\def\SO{{\rm SO}}
\def\G{{\rm G}}
\def\U{{\rm U}}
\def\St{{\rm St}}
\def\Wh{{\rm Wh}}
\def\RS{{\rm RS}}
\def\ind{{\rm ind}}
\def\Ind{{\rm Ind}}
\def\csupp{{\rm csupp}}

\subjclass{Primary 11F70; Secondary 22E55}
\title{Archimedean Distinguished Representations and Exceptional Poles}
\author{Akash Yadav}
\address{Department of Mathematics \\
Indian Institute of Technology Bombay \\ Mumbai 400076}
\email{194090003@iitb.ac.in}
\begin{abstract}
Let $F$ be an archimedean local field and let $E$ be $F\times F$ (resp. a quadratic extension of $F$). We prove that an irreducible generic (resp. nearly tempered) representation of $\GL_n(E)$ is $\GL_n(F)$ distinguished if and only if its Rankin-Selberg (resp. Asai) $L$-function has an exceptional pole of level zero at $0$. Further,  we deduce a necessary condition for the ramification of such representations using the theory of weak test vectors developed by Humphries and Jo. 
\end{abstract}
\maketitle
\section{Introduction}

Let $F$ be an archimedean local field and let $E$ be either $F\times F$ (the split case) or a quadratic extension of $F$ (the inert case). By a smooth representation of $\GL_n(E)$, we mean a smooth admissible Fr\'echet representation of moderate growth in the sense of Casselman-Wallach. We say that an irreducible smooth representation $\pi$ of $\GL_n(E)$ is $\GL_n(F)$-distinguished, or simply distinguished, if $\operatorname{Hom}_{\GL_n(F)}(\pi,1)\neq 0$.

Baruch \cite{bar03} in the split case 
(resp. Kemarsky \cite{kem15a} in the inert case) established that for a distinguished representation of $\GL_n(E)$, linear forms invariant under the mirabolic subgroup $P_n(F)$ of $\GL_n(F)$ are, in fact, $\GL_n(F)$-invariant. This result allows us to relate the property of being distinguished to the exceptional poles of the Rankin-Selberg $L$-function in the split case (resp. the Asai $L$-function in the inert case). The notion of exceptional poles in the archimedean setting (\cite{jc15}) is somewhat more nuanced than in the $p$-adic case, as they can have various integer levels $m \geq 0$. Exceptional poles in the non-archimedean setting are essentially of level zero. 

Let $\psi: E\to \mathbb{S}^1$ be a fixed additive character trivial on $F$ and let $\pi$ be an irreducible generic representation of $\GL_n(E)$ having Whittaker model $\mathcal{W}(\pi,\psi)$. Let $\mathcal{S}=\mathcal{S}(F^n)$
denote the Schwartz space of $F^n$ and $e_n=(0,0,...,1)\in F^n$. We consider the family of Rankin-Selberg integrals \cite{jpss83} (resp. Flicker integrals \cite{f93}) 
$\mathcal{J}(\pi)=\{I(s, W, \Phi)\,\vert\, W\in\mathcal{W}(\pi,\psi),\Phi\in\mathcal{S}\}$, where
\[
I(s, W, \Phi)=\int_{N_n(F) \backslash \G L_n(F)} W(g)\Phi(e_n g)|\operatorname{det}(g)|_F^s \>d g.
\]
In \cite{jac09} (resp.\cite{bp21}), it is shown that as complex functions of $s$ these integrals converge in some right half-plane and are holomorphic multiples of the Rankin-Selberg (resp. Asai) $L$-function. 
We have a natural filtration $\mathcal{S}=\mathcal{S}^0 \supset \mathcal{S}^1 \supset \cdots \supset \mathcal{S}^m \supset \cdots$, with $\mathcal{S}^m$ defined as the set of functions $f \in \mathcal{S}$ vanishing to order at least $m$ at $0$.
At a pole $s_0$ of maximal order for the family $\mathcal{J}(\pi)$, we have, for each integral $I(s, W, \Phi)$, a Laurent expansion
\[
I(s, W, \Phi)=\frac{B_{s_0}\left(W, \Phi\right)}{\left(s-{s_0}\right)^d}+\cdots ,
\]
where $B_{s_0}:\mathcal{W}(\pi,\psi)\times\mathcal{S}\to \C$ is a bilinear form. We say that $s_0$ is an exceptional pole of level $m$ for $\mathcal{J}(\pi)$ if $B_{s_0}$ vanishes identically on $\mathcal{S}^{m+1}$ but not on $\mathcal{S}^m$. The main result of this paper is as follows.

\begin{thm}\label{1.1}
   Let $\pi$ be an irreducible generic representation of $\GL_n(E)$, assumed to be nearly tempered in the inert case. Then $\pi$ is distinguished if and only if the family of integrals $\mathcal{J}(\pi)$ has an exceptional pole of level zero at $0$.
\end{thm}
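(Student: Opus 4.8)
The plan is to prove both directions by analyzing the bilinear form $B_0$ attached to a pole at $s_0 = 0$ and identifying its invariance properties. First I would recall the standard equivariance property of the Rankin–Selberg/Flicker integrals: for $p \in P_n(F)$ (the mirabolic subgroup) one has $I(s, \pi(p)W, \Phi) = I(s, W, \Phi)$ after the obvious change of variables, while for general $g \in \GL_n(F)$ a factor $|\det g|^{-s}$ and a translation of $\Phi$ appears. Passing to the leading Laurent coefficient at $s_0 = 0$ kills the $|\det g|^{-s}$ factor, so $B_0$ is $P_n(F)$-invariant in the first variable (up to the action on $\Phi$ in the second). The key structural fact, extracted exactly as in Jacquet–Piatetski-Shapiro–Shalika and its archimedean refinement \cite{jc15}, is that $s_0 = 0$ is an exceptional pole of level zero precisely when $B_0$ descends to a nonzero functional on $\mathcal{W}(\pi,\psi)$ alone — i.e. factors through evaluation $\Phi \mapsto \Phi(0)$, equivalently vanishes on $\mathcal{S}^1$ — and this descended functional $L_0(W) := B_0(W, \Phi)/\Phi(0)$ is then invariant under all of $P_n(F)$ acting on $W$.

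Next I would upgrade $P_n(F)$-invariance to $\GL_n(F)$-invariance. This is where Baruch's theorem \cite{bar03} in the split case and Kemarsky's theorem \cite{kem15a} in the inert case enter: any $P_n(F)$-invariant continuous linear form on an irreducible generic (nearly tempered, in the inert case) representation $\pi$ of $\GL_n(E)$ is automatically $\GL_n(F)$-invariant. Hence a level-zero exceptional pole at $0$ yields a nonzero element of $\Hom_{\GL_n(F)}(\pi, 1)$, proving the "if" direction. The near-temperedness hypothesis in the inert case is exactly what makes Kemarsky's extension result applicable and also guarantees the requisite convergence of the Flicker integral near $s = 0$.

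For the converse, suppose $\pi$ is distinguished, so there is a nonzero $\GL_n(F)$-invariant functional $\ell$ on $\pi$, which we realize on the Whittaker model. The strategy is to produce, from $\ell$, a nonzero functional on the space of integrals that forces a pole at $s_0 = 0$ of the exceptional, level-zero type. Concretely, I would consider the "boundary" integral obtained by restricting $W(g)\Phi(e_n g)$ to the locus where $\Phi$ is supported near $0$; integrating $W$ against $\ell$ over $N_n(F)\backslash \GL_n(F)$ (suitably regularized) produces a $\GL_n(F)$-invariant pairing that, plugged into the Mellin-transform defining $I(s,W,\Phi)$, manifests as a pole at $s=0$ with residue proportional to $\ell(W)\Phi(0)$. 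One must check this residue is genuinely nonzero — using that $\ell \neq 0$ and that $\Phi(0)$ can be taken nonzero — and that the pole is at a place of maximal order so that $B_0$ is exactly the leading coefficient; the vanishing on $\mathcal{S}^1$ is automatic from the construction since only $\Phi(0)$ survives.

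The main obstacle I anticipate is the converse direction, specifically the regularization needed to make sense of "integrate $W$ against $\ell$" and to show the resulting object contributes a pole of the correct order at exactly $s_0 = 0$ rather than being holomorphic there or masked by a higher-order pole coming from the full $L$-function. In the archimedean setting one cannot simply truncate supports as in the $p$-adic case; instead I would use the asymptotic expansion of Whittaker functions along $N_n(F)\backslash \GL_n(F)$ together with the moderate-growth estimates, and argue that the leading term of that expansion pairs nontrivially with $\ell$ precisely because $\ell$ is $\GL_n(F)$-invariant (a $P_n(F)$-invariant but non-$\GL_n(F)$-invariant functional would not survive — but by Baruch/Kemarsky no such functional exists, which is a consistency check rather than a gap). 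Controlling the order of the pole is the delicate point, and I expect to invoke the archimedean theory of \cite{jc15} on the precise relationship between exceptional poles of various levels and the factorization of $I(s,W,\Phi)$ through $L$-functions of derivatives of $\pi$, isolating level zero as the one detecting $\GL_n(F)$- rather than merely $P_n(F)$-invariant structure.
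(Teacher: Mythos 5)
Your ``if'' direction is essentially fine in outcome but is argued in a needlessly roundabout and, as written, circular way. You pass from $P_n(F)$-invariance of the descended functional to $\GL_n(F)$-invariance by invoking Baruch--Kemarsky; but that result (Proposition \ref{3.2} here) is a statement about \emph{distinguished} representations, i.e.\ it presupposes exactly what you are trying to prove in this direction. No such input is needed: the leading Laurent coefficient satisfies $B_{0}(g.W,g.\Phi)=|\det g|^{-s_0}B_{0}(W,\Phi)$ for \emph{all} $g\in G_n(F)$, and at $s_0=0$ with $B_0(W,\Phi)=\lambda_0(W)\Phi(0)$ one has $(g.\Phi)(0)=\Phi(0)$, so $\lambda_0$ is already $G_n(F)$-invariant. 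This is how the paper argues (Proposition \ref{3.3}), and your version should be repaired accordingly.

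The genuine gap is in the converse. You never produce an actual mechanism forcing a pole of $\mathcal{J}(\pi)$ at $s=0$, nor do you control its maximal order or the vanishing of $B_0$ on $\mathcal{S}^1$: the ``regularized pairing against $\ell$'' cannot be carried out directly at $s=0$, since the integrals $I(s,W,\Phi)$ need not converge there (for nearly tempered $\pi$ one only has convergence for $\operatorname{Re}(s)\geq 1/2$), and the pole structure at $0$ is governed by $L(s,\pi,\mathrm{As})$, which is not accessible by a residue computation on the integral itself. The paper instead works at the convergent point $s=1$ on the contragredient side: using the Iwasawa decomposition (split case) or the fibration over $P_n(F)\backslash G_n(F)\cong F^n\setminus\{0\}$ plus Fourier inversion (inert case), together with Proposition \ref{3.2} applied to the distinguished representation $\widetilde{\pi}$, it proves the identity $I(1,\widetilde{W},\widehat{\Phi})/L(1,\widetilde{\pi},\mathrm{As})=c\,\ell'(\widetilde{W})\,\Phi(0)$; the local functional equation (Theorem \ref{3.1}(2)) then transfers this to $s=0$, giving $I(0,W,\Phi)/L(0,\pi,\mathrm{As})=0$ for all $\Phi\in\mathcal{S}^1$; finally the non-vanishing result of Beuzart-Plessis (Proposition \ref{3.8}, with $\Phi_1\in\mathcal{S}^1$) forces $L(s,\pi,\mathrm{As})$ to have a pole at $0$, and Theorem \ref{3.1}(3) (this is exactly where near-temperedness enters in the inert case) identifies the maximal order of the family's pole with the order of that $L$-pole, so that $B_0$ is a nonzero multiple of $I(0,\cdot,\cdot)/L(0,\pi,\mathrm{As})$, vanishes on $\mathcal{S}^1$, and is nontrivial. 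Your appeal to an archimedean theory of derivatives/exceptional-pole factorization in \cite{jc15} to ``control the order of the pole'' does not exist in the required generality -- the absence of such a theory is precisely why the paper routes the argument through the functional equation and the non-vanishing statements of \cite{bp21}, and why the inert case is restricted to nearly tempered representations. As it stands, your converse asserts the existence, location, maximal order, and level of the pole without proof of any of these points.
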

We refer to Section \ref{s2} for the definition of nearly tempered representations. 

In \cite{nm10}, Matringe established that for a quadratic extension $E/F$ of non-archimedean local fields, an irreducible generic representation of $\GL_n(E)$ is distinguished if and only if its Asai $L$-function has an exceptional pole at $0$. Our proof of Theorem $\ref{1.1}$ closely follows Matringe's approach. In the non-archimedean setting, the Asai $L$-function serves as the greatest common divisor of the family of Flicker integrals, ensuring that the quotient of these integrals and the Asai $L$-function is non-vanishing. However, in the archimedean case, the non-vanishing of this quotient is known only for nearly tempered representations. This compels us to restrict ourselves to the category of nearly tempered representations in the inert case.  

We briefly describe the content of each section of this paper. In Section \ref{s2}, we set up notation and conventions related to representations of the group $\GL_n$ and their $L$-functions. Next, we recall and prove some results on Rankin-Selberg and Flicker integrals in Section \ref{s3} and present the proof of Theorem \ref{1.1} in Sections \ref{s4} and \ref{s5}. Finally, in Section \ref{s6}, we establish some results on archimedean exceptional poles.

\section{Notation}\label{s2}

    Let $F$ be an archimedean local field and $E$ be either a quadratic extension of $F$ (the inert case) or $F\times F$ (the split case). We write $|\cdot|_F$ and $|\cdot|_E$ for the normalized absolute values of $F$ and $E$ respectively. Thus, in the split case we have $|(\lambda, \mu)|_E=$ $|\lambda|_F|\mu|_F$ for every $(\lambda, \mu) \in E$ and in both cases we have $|x|_E=|x|_F^2$ for every $x \in F$. Let $G_n$ denote the group $\GL_n$ ($n\in\mathbb{N}$) with $Z_n$, $B_n$ and $N_n$ being the subgroups of scalar, upper triangular and unipotent upper triangular matrices in $G_n$ respectively.  Let $P_n$ be the mirabolic subgroup of $G_n$ of matrices with last row $(0,\ldots,0,1)$. Let \(K_n\) represent the standard maximal compact subgroup of \(G_n(F)\), where \(K_n = O(n)\) if \(F = \mathbb{R}\), and \(K_n = U(n)\) if \(F = \mathbb{C}\).
 Let $\delta_n$ denote the modular character of $B_n(F)$. We denote the set of  isomorphism classes of irreducible admissible Casselman-Wallach representations of $G$ by $\operatorname{Irr}(G)$ and its subset of square integrable representations by $\Pi(G)$.

For $\pi\in\operatorname{Irr}\left(G_n(E)\right)$, we will denote by $\omega_\pi$ its central character. Let $P$ be a standard parabolic subgroup of $G_n$ and let $MU$ be its Levi decomposition. Then, $M$ is of the form
\[
M=G_{n_1} \times \ldots \times G_{n_k}
\]
for some integers $n_1, \ldots, n_k$ with $n_1+\ldots+n_k=n$. Let $\tau_i \in \operatorname{Irr}\left(G_{n_i}(E)\right)$, for every $1 \leqslant i \leqslant k$, be an irreducible representation of $G_{n_i}(E)$ so that $\sigma=\tau_1 \otimes \ldots \otimes \tau_k$ is an irreducible representation of $M(E)$. We will denote by
\[
\tau_1 \times \ldots \times \tau_k
\]
the normalized induced representation $i_{P(E)}^{G_n(E)}(\sigma)$. A representation $\pi\in\operatorname{Irr}\left(G_n(E)\right)$ is generic if it admits a nonzero Whittaker functional with respect to any (or equivalently one) generic character of $N_n(E)$. We will denote by $\operatorname{Irr}_{\operatorname{gen}}\left(G_n(E)\right)$ the subset of generic representations in $\operatorname{Irr}\left(G_n(E)\right)$. By Theorem 6.2f of \cite{dv78}, every $\pi\in \operatorname{Irr}_{\operatorname{gen}}(G_n(E))$ is isomorphic to a representation of
the form $\tau_1 \times \ldots \times \tau_k$, where for each $1\leq i\leq  k$, $\tau_i$ is an essentially square-integrable representation (i.e., an unramified twist of a square-integrable representation) of some $G_{n_i} (E)$. We will say that a representation $\pi$ of $G_n(E)$ is nearly tempered if it is isomorphic to an induced representation of the form:
\[
\left(\tau_1 \otimes|\operatorname{det}|_E^{\lambda_1}\right) \times \ldots \times\left(\tau_k \otimes|\operatorname{det}|_E^{\lambda_k}\right),
\]
where for each $1 \leqslant i \leqslant k, \tau_i \in \Pi\left(G_{n_j}(E)\right)$ for some $n_i \geqslant 1$ and $\lambda_i$ is a real number such that $|\lambda_i|<1/4$. By \cite{vs80}, every nearly tempered representation is irreducible and generic. We will denote by $\operatorname{Irr}_{\operatorname{ntemp}}(G_n(E))\subset\operatorname{Irr}_{\operatorname{gen}}(G_n(E))$ the subset of nearly tempered representations.

Fix non-trivial additive characters $\psi^{\prime}: F \rightarrow \mathbb{S}^1$ and $\psi: E \rightarrow \mathbb{S}^1$ and assume that $\psi$ is trivial on $F$. We define generic characters $\psi_n: N_n(E) \rightarrow \mathbb{S}^1, \psi_n^{\prime}: N_n(F) \rightarrow \mathbb{S}^1$ by
\[
\psi_n^{\prime}(u)=\psi^{\prime}\left((-1)^n \sum_{i=1}^{n-1} u_{i, i+1}\right) \text { and } \psi_n(u)=\psi\left((-1)^n \sum_{i=1}^{n-1} u_{i, i+1}\right) .
\]
If $\pi\in\operatorname{Irr}_{\operatorname{gen}}\left(G_n(E)\right)$, we
write $\mathcal{W}\left(\pi, \psi_n\right)$ for its Whittaker model (with
respect to $\psi_n$). Let $C^{\infty}
\left(N_n(E) \backslash G_n(E), \psi_n\right)$ denote the space of all smooth functions $W:G_n(E)\to\mathbb{C}$ such that $W(ug)=\psi_n(u)W(g)$ for every $u\in N_n(E),g\in G_n(E)$. For every Whittaker function $W \in C^{\infty}
\left(N_n(E) \backslash G_n(E), \psi_n\right)$, set $\widetilde{W} \in
C^{\infty}\left(N_n(E) \backslash G_n(E), \psi_n^{-1}\right)$ by

\[
\widetilde{W}(g)=W\left(w_n{ }^t g^{-1}\right) \quad\left(g \in G_n(E)\right),
\]
where
$$w_n= \begin{bmatrix}
 &  &  &  & 1 \\
 &  &  & \cdot &  \\
 &  & \cdot &  &  \\
 & \cdot &  &  &  \\
1 &  &  &  &  
\end{bmatrix}.$$ 
Then the map $W \mapsto \widetilde{W}$ induces a (topological) isomorphism $\mathcal{W}\left(\pi, \psi_n\right) \simeq$ $\mathcal{W}\left(\widetilde{\pi}, \psi_n^{-1}\right)$, where $\widetilde{\pi}$ denotes the smooth contragredient of $\pi$.

We let $\mathcal{S}\left(F^n\right)$ be the Schwartz space on $F^n$. We denote by $\Phi \mapsto \widehat{\Phi}$ be the usual Fourier transform on $F^n$ defined using the additive character $\psi^{\prime}$ and the corresponding self-dual measure: for every $\Phi \in \mathcal{S}\left(F^n\right)$ we have
$$
\widehat{\Phi}\left(x_1, \ldots, x_n\right)=\int_{F^n} \Phi\left(y_1, \ldots, y_n\right) \psi^{\prime}\left(x_1 y_1+\ldots+x_n y_n\right) d y_1 \ldots d y_n 
$$
for all $\left(x_1, \ldots, x_n\right) \in F^n$, where the measure of integration is chosen so that $\widehat{\widehat{\Phi}}(v)=\Phi(-v)$. Let $e_n=(0, \ldots, 0,1) \in F^n$.

Let $W_F$ be the Weil group of $F$. In the inert case, we define similarly the Weil group $W_E$ of $E$. An admissible complex representation of $W_F$ (resp. $W_E$ ) is by definition a continuous morphism $\phi: W_F \rightarrow \GL(V)$ (resp. $\phi: W_E\rightarrow \GL(V)$ ), where $V$ is a finite dimensional complex vector space. To any admissible complex representation $\phi: W_F\rightarrow \GL(V)$, we associate a local $L$-factor $L(s, \phi)$ and a local $\epsilon$-factor $\epsilon\left(s, \phi, \psi^{\prime}\right)$ as in \cite{ta79}. 

Let $\eta_{E / F}$ be the quadratic character of $W_F$ associated to the extension $E / F$ and set
$$
\lambda_{E / F}\left(\psi^{\prime}\right)=\epsilon\left(\frac{1}{2}, \eta_{E / F}, \psi^{\prime}\right) .
$$
This is called the Langlands constant of the extension $E/F$ . It is a fourth root of unity which is trivial in the split case. 

In the split case, we set $\tau$ to be the unique element $(\beta,-\beta)\in F^{\times}\times F^{\times}$ such that $$\psi(x,y)=\psi^{\prime}(\beta x)\psi^{\prime}(-\beta y).$$

For $\pi=\pi_1\otimes\pi_2\in\operatorname{Irr}_{\operatorname{gen}}\left(G_n(F\times F)\right)$ and $\tau=(1,-1)$, every $W\in\mathcal{W}\left(\pi, \psi_n\right)$ can be written as $W_1\otimes W_2$ for some $W_1\in\mathcal{W}\left(\pi_1, \psi^{\prime}_n\right)$ and $W_2\in\mathcal{W}\left(\pi_2, {\psi^{\prime}_n}^{-1}\right).$ Then for $\Phi\in\mathcal{S}$, the integral $I(s,W,\Phi)$ equals
\[I(s,W,\Phi)=\int_{N_n(F) \backslash \G L_n(F)} W_1(g)W_2(g)\Phi(e_n g)|\operatorname{det}(g)|_F^s \>d g.\]

In the inert case, we set $\tau$ to be the unique element in $E$ such that $\psi(z)=\psi'(\operatorname{Tr}_{E/F}(\tau z))$ for every $z\in E$, where $\operatorname{Tr}_{E/F}$ stands
for the trace of the extension $E/F$. 

Assume that we are in the split case. Let $\pi=\pi_1 \otimes \pi_2$ be an irreducible representation of $G_n(E)=G_n(F) \times G_n(F)$. Let $\phi_1, \phi_2: W_F\rightarrow \GL_n(\mathbb{C})$ be the admissible complex representations associated to $\pi_1$ and $\pi_2$ respectively by the local Langlands correspondence (\cite{rpl89}). Then, we define
\[
\begin{aligned}
& L\left(s, \pi,\mathrm{As}\right):=L\left(s, \phi_1 \otimes \phi_2\right), \\
&\epsilon\left(s, \pi, \psi^{\prime},\mathrm{As}\right):=\epsilon\left(s, \phi_1 \otimes \phi_2, \psi^{\prime}\right).
\end{aligned}
\]
We remark that $L\left(s, \pi,\mathrm{As}\right)$ is more commonly denoted by $L\left(s, \pi_1 \times \pi_2\right)$ in the literature, and is usually called the Rankin-Selberg convolution or the tensor product $L$-function of $\pi_1$ and $\pi_2$. Our notation ensures that our results can be stated uniformly for the split and inert cases.

Assume now that we are in the inert case, i.e., $E=\mathbb{C}$ and $F=\mathbb{R}$. Then $W_E=\mathbb{C}^*$ and $W_F=\mathbb{C}^*\bigcup j\mathbb{C}^*$, where $j^2=-1$ and $jwj^{-1}=\overline{w}$ for all $w\in\mathbb{C}^*$ (here bar denotes the complex conjugation). Let $\phi: W_E \rightarrow \GL(V)$ be an admissible representation. We define $\operatorname{As}(\phi): W_F\mapsto \GL(V \otimes V)$ by 
$$
\begin{aligned}
& \operatorname{As}(\phi)(w)=\phi(w) \otimes \phi\left(\overline{w}\right) \text{ for } w \in \mathbb{C}^* \\
&\operatorname{As}(\phi)(j)=\left(\operatorname{Id}_V \otimes \>\phi\left(-1\right)\right) \circ \iota,
\end{aligned}
$$   
where $\iota$ is the linear automorphism of $V \otimes V$ sending $u \otimes v$ to $v \otimes u$. Then, $\operatorname{As}(\phi)$ is an admissible representation of $W_F$ and we call $
L(s, \operatorname{As}(\phi))$ and $\epsilon\left(s, \operatorname{As}(\phi), \psi^{\prime}\right)$ the Asai $L$-function and $\epsilon$-factor of $\phi$ respectively. 

Let $\pi \in \operatorname{Irr}\left(G_n(E)\right)$. Then, the local Langlands correspondence for $\GL_n$ associates to $\pi$ an admissible complex representation $\phi_\pi: W_E \rightarrow$ $\GL(V)$ of dimension $n$. We set 
\[
\begin{aligned}
&L(s, \pi, \mathrm{As}):=L(s, \operatorname{As}(\phi))\\
&\epsilon(s, \pi, \mathrm{As}):=\epsilon(s, \operatorname{As}(\phi)),
\end{aligned}
\]
and call them the Asai $L$-function and $\epsilon$-factor of $\pi$ respectively. 

\section{Preliminary Results}\label{s3}
For $\pi\in\operatorname{Irr}_{\operatorname{gen}}\left(G_n(E)\right)$, consider as before the family of integrals
\[
\mathcal{J}(\pi)=\{I(s,W,\Phi):W\in\mathcal{W}(\pi,\psi_n),\Phi\in\mathcal{S}(F^n)\}.
\]
The following theorem combines the work of Jacquet-Piatetskii-Shapiro-Shalika in \cite{jpss83} and Jacquet in \cite{jac09} for the split case and of Beuzart-Plessis in \cite{bp21} for the inert case.

\begin{thm}\label{3.1}
For $\pi\in\operatorname{Irr}_{\operatorname{gen}}(G_n(E))$, let $W \in \mathcal{W}(\pi, \psi_n)$ and $\Phi \in \mathcal{S}(F^n).$ 

\begin{enumerate}

    \item The integrals $I(s, W, \Phi)$ are convergent when $\operatorname{Re}(s)\gg 0$, and moreover extend to meromorphic functions on $\mathbb{C}$. Further, if $\pi\in\operatorname{Irr}_{\operatorname{ntemp}}(G_n(E))$, these integrals converge absolutely when $\operatorname{Re}(s)\geq 1/2$. \\
    \item We have the functional equation
\[
\qquad\quad\frac{I(1-s, \widetilde{W}, \widehat{\Phi})}{L(1-s, \widetilde{\pi}, \mathrm{As})}= \omega_\pi(\tau)^{n-1}|\tau|_E^{\frac{n(n-1)}{2}(s-1 / 2)} \lambda_{E / F}\left(\psi^{\prime}\right)^{-\frac{n(n-1)}{2}}\epsilon\left(s, \pi, \mathrm{As}, \psi^{\prime}\right) \frac{I(s, W, \Phi)}{L(s, \pi, \mathrm{As})}.
\]

\item The function $s \mapsto \dfrac{I(s, W, \Phi)}{L(s, \pi, \mathrm{As})}$ is holomorphic. Further, in the split case, for any $s_0 \in \mathbb{C}$, one can choose $W \in \mathcal{W}(\pi, \psi_n)$ and $\Phi \in \mathcal{S}\left(F^n\right)$ such that this function does not vanish at $s_0$. Such a choice can be made in the inert case as well provided $\pi$ is nearly tempered.
\end{enumerate}

\end{thm}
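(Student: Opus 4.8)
Since the statement packages results of Jacquet–Piatetskii-Shapiro–Shalika \cite{jpss83}, Jacquet \cite{jac09}, and Beuzart-Plessis \cite{bp21}, the plan is to assemble those references while spelling out the analytic inputs and isolating the one genuinely delicate point.

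For part (1) and the holomorphy assertion in part (3) I would argue as follows. In the split case, writing $W=W_1\otimes W_2$ as in Section \ref{s2}, the integral $I(s,W,\Phi)$ is exactly the $\GL_n\times\GL_n$ Rankin–Selberg integral of \cite{jpss83}, while in the inert case it is the Flicker integral of \cite{f93}, \cite{bp21}. Convergence for $\operatorname{Re}(s)\gg 0$ follows from the gauge (asymptotic) estimates for Whittaker functions on $\GL_n(F)$ supplied by Casselman–Wallach theory together with the rapid decay of $\Phi$ away from the origin: by the Iwasawa decomposition the integral reduces, in a right half-plane, to an absolutely convergent Mellin-type integral over the diagonal torus. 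Meromorphic continuation and the holomorphy of $I(s,W,\Phi)/L(s,\pi,\mathrm{As})$ are then obtained as in \cite{jac09} (split) and \cite{bp21} (inert): one locates every pole of the family $\mathcal{J}(\pi)$, with multiplicity, by reading off the leading exponents of Whittaker functions along the torus and matching them with the poles of the $\mathrm{As}$ $L$-factor attached to the Langlands parameter of $\pi$. For the last claim of part (1), when $\pi$ is nearly tempered — written as in Section \ref{s2} with $|\lambda_i|<1/4$ — these exponents lie strictly inside the region forcing absolute convergence for $\operatorname{Re}(s)\ge 1/2$, which is the estimate carried out in \cite{bp21}.

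Part (2) is the local functional equation, which I would deduce from the one-dimensionality, up to scalars, of the space of $(N_n(E),\psi_n)$-Whittaker functionals on $\pi$. Both $s\mapsto I(s,W,\Phi)/L(s,\pi,\mathrm{As})$ and $s\mapsto I(1-s,\widetilde{W},\widehat{\Phi})/L(1-s,\widetilde{\pi},\mathrm{As})$, regarded as families of bilinear forms in $(W,\Phi)$ with the same $\GL_n(F)$-equivariance, must be proportional by this uniqueness; the proportionality factor is a meromorphic function of $s$ that one identifies with the stated product of $\omega_\pi(\tau)$, a power of $|\tau|_E$, a power of $\lambda_{E/F}(\psi')$ and the $\mathrm{As}$ $\epsilon$-factor, either by a global argument or by reduction to the rank-one case through a partial Fourier transform in the last coordinate of $\Phi$. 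This is precisely the content of \cite{jac09} and \cite{bp21}.

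For the non-vanishing half of part (3): in the split case this is the assertion that the Rankin–Selberg $L$-function is, in the archimedean sense, the exact greatest common divisor of the integrals, and it is standard and well documented in the literature — given $s_0$, one constructs an explicit pair $(W_1\otimes W_2,\Phi)$ for which $I(s,W,\Phi)/L(s,\pi_1\times\pi_2)$ is nonzero at $s_0$ by descending through the inductive structure $\pi=\tau_1\times\cdots\times\tau_k$ to essentially square-integrable data and finally to a $\GL_1$ computation. In the inert case the same scheme applies once $\pi$ is nearly tempered, with \cite{bp21} supplying the required test vectors in that range. I expect this to be the main obstacle, and it is exactly what forces the nearly tempered hypothesis: for a general irreducible generic representation of $\GL_n(E)$ in the inert case, controlling the order of the poles of the Flicker integrals and producing non-vanishing test vectors outside the (nearly) tempered range is not available in the literature, so Theorem \ref{1.1} must be stated with that restriction.
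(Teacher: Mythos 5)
Your proposal matches the paper's treatment: the paper gives no proof of Theorem \ref{3.1} beyond attributing it to \cite{jpss83} and \cite{jac09} in the split case and to \cite{bp21} in the inert case, which is exactly the citation assembly you carry out, and your sketched analytic inputs (Whittaker asymptotics for convergence, uniqueness/equivariance for the functional equation, test-vector constructions for non-vanishing) are consistent with those sources. You also correctly isolate the non-vanishing statement in the inert case as the step forcing the nearly tempered hypothesis, which is precisely the limitation the paper itself highlights.
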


We first recall the following proposition due to Kemarsky \cite{kem15a} in the inert case and Baruch \cite{bar03} in the split case.

\begin{prop}\label{3.2}
    For a distinguished representation $\pi\in\operatorname{Irr}_{\operatorname{gen}}\left(G_n(E)\right)$, any $P_n(F)$-invariant linear form on the space of $\pi$ is actually $G_n(F)$-invariant.
\end{prop}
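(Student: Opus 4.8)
The plan is to follow Baruch in the split case and Kemarsky in the inert case, both being archimedean refinements of Flicker's $p$-adic argument; I sketch the strategy common to the two.

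The inclusion $\operatorname{Hom}_{G_n(F)}(\pi,1)\subseteq\operatorname{Hom}_{P_n(F)}(\pi,1)$ is automatic, and since $\pi$ is distinguished the left-hand side is nonzero. It therefore suffices to prove the multiplicity bound
\[
\dim_{\mathbb{C}}\operatorname{Hom}_{P_n(F)}(\pi,1)\leq 1,
\]
for then both spaces are one-dimensional and hence coincide, which is exactly the assertion. In the split case $\pi=\pi_1\otimes\pi_2$, distinction forces $\pi_1\simeq\widetilde{\pi_2}$, so $\operatorname{Hom}_{P_n(F)}(\pi,1)\simeq\operatorname{Hom}_{P_n(F)}(\pi_1,\widetilde{\pi_2})\simeq\operatorname{End}_{P_n(F)}(\pi_1)$, and the bound reduces to the one-dimensionality of $\operatorname{End}_{P_n(F)}(\pi_1)$ for irreducible generic $\pi_1$, which is part of Baruch's analysis of the restriction of generic representations to the mirabolic subgroup in \cite{bar03} (closely related to Kirillov's conjecture). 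So the real content is the inert case.

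For the inert case I would run the Bernstein--Zelevinsky machinery in the Casselman--Wallach category. The restriction $\pi|_{P_n(E)}$ carries the archimedean BZ filtration whose successive subquotients are built from the derivatives $\pi^{(k)}$, $1\leq k\leq n$, via the functors $\Phi^{+},\Psi^{+}$; because $\pi$ is generic its top subquotient is the induced (Kirillov) module $\operatorname{ind}_{N_n(E)}^{P_n(E)}(\psi_n)$. Restricting this filtration further to $P_n(F)$ and applying $\operatorname{Hom}_{P_n(F)}(-,1)$, a Mackey and Frobenius-reciprocity analysis pins down the $P_n(F)$-invariant functionals on the top subquotient: one runs over the double coset space $P_n(F)\backslash P_n(E)/N_n(E)$, and since $\psi$, hence $\psi_n$, is trivial on $F$, the character-matching condition is met only on the identity double coset, yielding a one-dimensional space spanned by the Whittaker--Kirillov integral $W\mapsto\int_{N_n(F)\backslash P_n(F)}W(p)\,dp$. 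One then shows that the lower subquotients, coming from proper derivatives $\pi^{(k)}$ with $k<n$, contribute no further $P_n(F)$-invariant functionals, by an induction on $n$ together with the vanishing of the relevant $\operatorname{Hom}$-spaces, so that $\dim_{\mathbb{C}}\operatorname{Hom}_{P_n(F)}(\pi,1)\leq 1$.

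The main obstacle is analytic rather than structural. In contrast to the $p$-adic smooth setting, the BZ functors $\Phi^{\pm},\Psi^{\pm}$, the Kirillov model, and all the $\operatorname{Hom}$ and $\operatorname{Ext}$ computations must be carried out for smooth Fr\'echet representations of moderate growth, with Schwartz inductions in place of ordinary ones and with continuity and exactness monitored at every step; in addition the convergence of the Whittaker--Kirillov integral and the comparison of integrals over $P_n(F)$ and over $G_n(F)$ demand genuine growth estimates for Whittaker functions. Supplying all of this is precisely the technical heart of \cite{bar03} and \cite{kem15a}, whose results I would invoke directly.
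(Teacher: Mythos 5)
The paper itself does not prove this proposition: it is quoted from \cite{bar03} (split case) and \cite{kem15a} (inert case), so your final move of invoking those references is in line with the paper. Your split-case outline is also essentially the standard way Baruch's work is applied: distinction forces $\pi_2\simeq\widetilde{\pi_1}$, and one needs that a $P_n(F)$-invariant pairing between $\pi_1$ and $\widetilde{\pi_1}$ is proportional to the canonical $G_n(F)$-invariant one. Be careful, though, to quote Baruch in that form (his theorem on $P_n$-invariant eigendistributions), not as irreducibility of $\pi_1|_{P_n(F)}$: Kirillov's conjecture concerns \emph{unitary} $\pi_1$, whereas here $\pi_1$ is merely generic.

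The genuine gap is in your inert-case strategy, which is neither Kemarsky's argument nor one that can work as sketched. You reduce to the bound $\dim\operatorname{Hom}_{P_n(F)}(\pi,1)\le 1$ and propose to get it from the Bernstein--Zelevinsky filtration of $\pi|_{P_n(E)}$ restricted to $P_n(F)$, asserting that the layers attached to proper derivatives ``contribute no further $P_n(F)$-invariant functionals.'' That assertion is false as a structural fact and is exactly where the hypothesis of distinction must enter, yet your sketch never uses distinction in this analysis. For instance (already in the $p$-adic model of the same filtration), take $n=2$ and $\pi=\alpha\times\beta$ with $\alpha|_{F^\times}=\beta|_{F^\times}=|\cdot|_F^{-1}$ and $\alpha\ne\beta$: then $\pi$ is irreducible, generic, \emph{not} distinguished, and the quotient layer $\Psi^{+}(\pi^{(1)})$ already produces a two-dimensional space of $P_2(F)$-invariant functionals; this is precisely the phenomenon of $P_n(F)$-distinguished but non-$G_n(F)$-distinguished representations. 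So the claimed vanishing cannot follow from the Mackey/double-coset and induction argument you describe; and once one does use distinction, proving that the lower layers contribute nothing is essentially equivalent to the proposition itself, so your ``reduction'' is circular rather than a reduction. Nor can the final appeal to \cite{kem15a} repair this: Kemarsky (like Baruch, and Ok in the $p$-adic case) does not prove a multiplicity bound on $\operatorname{Hom}_{P_n(F)}(\pi,1)$ via an archimedean BZ filtration with the Hom/Ext control you would need in the Casselman--Wallach category; he shows directly, by distribution-theoretic methods (invariant eigendistributions, descent, together with the Kirillov-model results of \cite{kem15b}), that a $P_n(F)$-invariant functional on a distinguished representation is $G_n(F)$-invariant. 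The missing idea, concretely, is any mechanism by which distinction of $\pi$ kills the invariant functionals potentially supported on the lower derivative layers.
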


Suppose $\pi\in\operatorname{Irr}_{\operatorname{gen}}\left(G_n(E)\right)$ and $s=s_0$ is a pole of maximal order $d$ for the family of integrals $\mathcal{J}(\pi)$, then we have an expansion
\[
I(s,W,\Phi)=\frac{B_{s_0}(W,\Phi)}{(s-s_0)^d}+\cdots ,
\]
where $B_{s_0}$ is a bilinear form on $\mathcal{W}(\pi,\psi_n)\times\mathcal{S}(F^n)$ satisfying the invariance property
$$ B_{s_0}(g.W,g.\Phi) = \left|\operatorname{det} g\right|^{{-s_0}}B_{s_0}(W,\Phi)$$
for every $g\in G_n(E), W\in\mathcal{W}(\pi,\psi_n),\Phi\in\mathcal{S}(F^n).$

Note that $\mathcal{S}^1$ is a codimension $1$ subspace of $\mathcal{S}$, being the kernel of the evaluation at zero map from $\mathcal{S}\to\mathbb{C}$. Therefore, if $s=s_0$ is an exceptional pole of level $0$ for the family $\mathcal{J}(\pi)$, then $B_{s_0}$ is of the form $B_{s_0}(W,\Phi)=\lambda_{s_0}(W)\Phi(0)$, where $\lambda_{s_0}$ is a non-zero linear form on $\mathcal{W}(\pi,\psi_n)$ such that $\lambda_{s_0}(g.W)=\vert\det(g)\vert^{-s_0}\lambda_{s_0}(W)$ for all $g\in G_n(E),\>W\in\mathcal{W}(\pi,\psi_n)$. Hence, we have:

\begin{prop}\label{3.3}
    Let $\pi\in\operatorname{Irr}_{\operatorname{gen}}\left(G_n(E)\right)$ be such that $s=0$ is an exceptional pole of level $0$ for the family $\mathcal{J}(\pi)$. Then $\pi$ is distinguished.
\end{prop}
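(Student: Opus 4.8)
The plan is to read off, directly from the leading Laurent coefficient $B_0$ of the family $\mathcal{J}(\pi)$ at $s=0$, a nonzero continuous $G_n(F)$-invariant linear functional on $\pi$; everything rests on the quasi-invariance of $B_{s_0}$ recorded above, specialized at $s_0=0$. I would first recall why that quasi-invariance holds: in the substitution $h\mapsto hg$ in the defining integral $I(s,W,\Phi)=\int_{N_n(F)\backslash G_n(F)}W(h)\,\Phi(e_n h)\,|\det h|^s\,dh$, the invariant measure on $N_n(F)\backslash G_n(F)$ is preserved and the factor $|\det h|^s$ yields $|\det g|^{-s}$, so $I(s,g.W,g.\Phi)=|\det g|^{-s}I(s,W,\Phi)$ for $g\in G_n(F)$ (in the split case, for $g$ in the diagonal copy of $G_n(F)$ inside $G_n(F\times F)$). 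Comparing the coefficients of $s^{-d}$ at $s=0$, where $|\det g|^{-s}$ specializes to $1$, we get $B_0(g.W,g.\Phi)=B_0(W,\Phi)$ for all such $g$.

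Next I would invoke the hypothesis that $s=0$ is an exceptional pole of level $0$: by definition $B_0$ vanishes on $\mathcal{S}^1$ but not on $\mathcal{S}=\mathcal{S}^0$, so $B_0(W,\Phi)=\lambda_0(W)\,\Phi(0)$ for a nonzero linear form $\lambda_0$ on $\mathcal{W}(\pi,\psi_n)$. Choosing $W_0,\Phi_0$ with $B_0(W_0,\Phi_0)\neq 0$ forces $\lambda_0(W_0)\neq 0$ and $\Phi_0(0)\neq 0$, and since $(g.\Phi_0)(0)=\Phi_0(0\cdot g)=\Phi_0(0)$ for $g\in G_n(F)$, evaluating the invariance of $B_0$ on the pairs $(W,\Phi_0)$ and cancelling $\Phi_0(0)$ gives $\lambda_0(g.W)=\lambda_0(W)$ for every $W\in\mathcal{W}(\pi,\psi_n)$ and every $g\in G_n(F)$. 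Transporting along the $G_n(E)$-equivariant isomorphism $\pi\simeq\mathcal{W}(\pi,\psi_n)$ then produces a nonzero $G_n(F)$-invariant linear form on $\pi$.

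The one step that needs genuine care --- and which I expect to be the only non-formal point --- is continuity: to obtain an element of $\operatorname{Hom}_{G_n(F)}(\pi,1)$ in the Casselman--Wallach category I must know $\lambda_0$ is continuous. I would deduce this from the fact that $B_0$ is the coefficient of $s^{-d}$ in the Laurent expansion at $s=0$ of the meromorphic family $s\mapsto I(s,\cdot,\cdot)$ of \emph{continuous} bilinear forms on $\mathcal{W}(\pi,\psi_n)\times\mathcal{S}(F^n)$ --- equivalently, by Theorem \ref{3.1}, $I(s,\cdot,\cdot)$ equals $L(s,\pi,\mathrm{As})$ times a family of continuous bilinear forms holomorphic in $s$ --- whence the Laurent coefficients are again continuous; restricting $B_0$ to the slice $\Phi=\Phi_0$ and dividing by the scalar $\Phi_0(0)\neq 0$ exhibits $\lambda_0$ as a continuous functional on $\mathcal{W}(\pi,\psi_n)$. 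This shows $\pi$ is distinguished. Note that Proposition \ref{3.2} is not needed in this direction --- the functional produced is automatically $G_n(F)$-invariant, not merely $P_n(F)$-invariant --- whereas it will be the crucial input for the converse implication of Theorem \ref{1.1}.
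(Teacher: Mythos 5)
Your argument is correct and follows essentially the same route as the paper: the quasi-invariance of the leading Laurent coefficient $B_0$ under the (diagonally embedded) $G_n(F)$-action, together with the level-zero hypothesis forcing $B_0(W,\Phi)=\lambda_0(W)\Phi(0)$, yields a nonzero $G_n(F)$-invariant linear form because the twist $\left|\det g\right|^{-s_0}$ is trivial at $s_0=0$. Your extra care about continuity of $\lambda_0$ (via continuity of the Laurent coefficients, restricted to a slice $\Phi=\Phi_0$ with $\Phi_0(0)\neq 0$) addresses a point the paper passes over silently, but it is a refinement of, not a departure from, the paper's proof.
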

For $W\in\mathcal{W}(\pi,\psi_n)$, consider the integral
\begin{align*}
  \Psi(s,W)&=\int_{N_n(F) \backslash P_n(F)} W(p)|\operatorname{det}(p)|_F^{s-1} \>d p\\ &=\int_{N_{n-1}(F) \backslash G_{n-1}(F)} W\begin{pmatrix}
g & 0 \\
0 & 1 
\end{pmatrix}|\operatorname{det}(g)|_F^{s-1} \>d g.  \end{align*}

\begin{prop}\label{3.4}
    The integral $\Psi(s,W)$ belongs to the complex vector space spanned by the elements of the family $\mathcal{J}(\pi)$ for all $W\in\mathcal{W}(\pi,\psi_n)$.
\end{prop}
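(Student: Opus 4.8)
The plan is to realize $\Psi(s,W)$ as a limit of Rankin--Selberg integrals $I(s,W,\Phi)$ by choosing $\Phi$ to concentrate near the point $e_n$. Recall that $I(s,W,\Phi)=\int_{N_n(F)\backslash G_n(F)}W(g)\Phi(e_ng)|\det g|_F^s\,dg$, while $\Psi(s,W)$ is the integral of $W(g)|\det g|_F^{s-1}$ over the smaller domain $N_n(F)\backslash P_n(F)$. The subgroup $P_n(F)$ is precisely the stabilizer of $e_n$ under the right action of $G_n(F)$ on row vectors, and the orbit of $e_n$ is the open dense subset $F^n\setminus\{0\}$, with $P_n(F)\backslash G_n(F)\simeq F^n\setminus\{0\}$ via $g\mapsto e_ng$. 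So, fixing a right-invariant measure, we can factor the $I$-integral as an iterated integral over $N_n(F)\backslash P_n(F)$ and then over $P_n(F)\backslash G_n(F)\simeq F^n\setminus\{0\}$; the extra $|\det|_F$ discrepancy between the exponents $s$ and $s-1$ is exactly absorbed by the Jacobian of this change of variables (since $|\det p|_F$ is the modulus by which $p\in P_n(F)$ scales things, and $\Phi(e_ng)=\Phi((e_np)h)$ behaves correctly under the decomposition $g=ph$).

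Concretely, first I would write, for $\operatorname{Re}(s)$ large so that everything converges absolutely (using Theorem \ref{3.1}(1) together with standard gauge estimates on Whittaker functions, which guarantee absolute convergence of $\Psi(s,W)$ as well in a right half-plane),
\[
I(s,W,\Phi)=\int_{F^n\setminus\{0\}}\Phi(v)\left(\int_{N_n(F)\backslash P_n(F)}W(p\,h_v)|\det(p\,h_v)|_F^{s}\,dp\right)\frac{dv}{|\,\cdot\,|},
\]
where $h_v\in G_n(F)$ is any element with $e_nh_v=v$, and the inner integral, after pulling out $|\det h_v|_F$, is essentially $\Psi(s,W')$ for the translated Whittaker function $W'=h_v.W$. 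Then I would pick a sequence (or net) $\Phi_k\in\mathcal{S}(F^n)$ of nonnegative bump functions supported in shrinking neighborhoods of $e_n$ with $\int\Phi_k=1$ in the appropriate normalization; by smoothness of $W$ and continuity in $v$, the integrals $I(s,W,\Phi_k)$ converge to $\Psi(s,W)$ pointwise in $s$ on the half-plane of convergence. Since each $I(s,W,\Phi_k)$ lies in the span of $\mathcal{J}(\pi)$ and $\Psi(s,W)/L(s,\pi,\As)$ is holomorphic (it is a holomorphic multiple of the $L$-function by the same JPSS/Jacquet/Beuzart-Plessis theory applied to the $P_n$-integral, or can be deduced from Theorem \ref{3.1}(3)), one concludes that $\Psi(s,W)$, being a limit of elements in a finite-dimensional-over-$\mathbb C(s)$ module and agreeing with a meromorphic function, actually coincides with an element of the $\mathbb C$-span of $\mathcal{J}(\pi)$.

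The delicate point is the last step: a pointwise limit of functions in $\operatorname{span}_{\mathbb C}\mathcal{J}(\pi)$ need not lie in that span, so I cannot simply invoke ``closedness.'' I expect the clean way around this is the standard Jacquet--Piatetski-Shapiro--Shalika device: choose a single $\Phi$ whose restriction to the line $\{(0,\dots,0,t):t\in F\}$ (equivalently, whose behavior under the torus element $\operatorname{diag}(1,\dots,1,t)$) lets one extract $\Psi(s,W)$ by a Mellin/contour argument, or more simply exploit the fact that $I(s,W,\Phi)$ depends on $\Phi$ only through a finite jet at $0$ and the values of $\widehat\Phi$ — combined with the $N_n(F)$-equivariance, this forces $\Psi(s,W)$ itself to be a specific finite linear combination of $I(s,W,\Phi_j)$ for finitely many explicit $\Phi_j$ (for instance a product of a Schwartz function in the first $n-1$ coordinates with a fixed bump in the last coordinate, unfolded via the partition $G_n=P_nK_n$ or the Iwasawa-type decomposition). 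So the main obstacle — and the step I would spend the most care on — is producing an \emph{honest finite} expression rather than a limit, and checking that the non-archimedean argument of \cite{jpss83} (or its archimedean refinement using that $\mathcal S(F^n)$ surjects onto jets at $0$) goes through verbatim in the Casselman--Wallach setting.
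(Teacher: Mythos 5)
Your opening unfolding of $I(s,W,\Phi)$ through $P_n(F)\backslash G_n(F)\simeq F^n\setminus\{0\}$ is indeed the right starting point, but the proposition is \emph{exactly} the step you defer at the end, and as written the proposal does not prove it. The limiting argument with bumps $\Phi_k$ concentrating at $e_n$ only shows that $\Psi(s,W)$ is a pointwise limit of elements of $\operatorname{span}_{\mathbb C}\mathcal{J}(\pi)$, which (as you yourself note) gives nothing; and the repairs you gesture at do not work as stated. The claim that $I(s,W,\Phi)$ ``depends on $\Phi$ only through a finite jet at $0$ and the values of $\widehat\Phi$'' is false — only the polar (exceptional) parts see the jet of $\Phi$ at $0$; and the $G_n=P_nK_n$ (Iwasawa) unfolding runs in the opposite direction: it writes $I(s,W,\Phi)$ as an integral of translates $\Psi(s,\pi(ak)W)$ (this is precisely what Sections \ref{s4}--\ref{s5} of the paper use), so it cannot by itself place $\Psi(s,W)$ inside the span of the $I$'s. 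A further point your sketch never confronts is the $s$-dependence: the non-archimedean proof produces an exact identity because $|\det|^{s}$ is trivial on a small compact open subgroup under which $W$ is invariant, and any archimedean substitute must deal with the factors $|t|^{s}$ coming from the radial direction transverse to $P_n(F)$ (for instance by exploiting that $\operatorname{diag}(1,\ldots,1,t)\in Z_nP_n$, so translating $W$ by it only rescales $\Psi(s,\cdot)$); concentrating bump functions does not address this.

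For comparison, the paper does not write out an argument at all: it observes that the proof is essentially that of Lemma 14.1 of \cite{jac09} and omits it. The mechanism there that replaces the $p$-adic invariance trick and produces an \emph{honest finite} identity $\Psi(s,W)=\sum_i c_i\, I(s,W_i,\Phi_i)$ is the Dixmier--Malliavin factorization theorem: one writes $W$ as a finite sum of vectors smoothed by $C_c^{\infty}$ test functions along a subgroup transverse to $P_n(F)$ (the last-row unipotents together with $\operatorname{diag}(1,\ldots,1,t)$), and the smoothing kernel is then converted, via the unfolding you wrote down, into genuine $s$-independent Schwartz functions $\Phi_i$ evaluated at $e_ng$. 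Your proposal never invokes Dixmier--Malliavin (nor any substitute for it), so the crucial finiteness step remains a gap; I would rewrite the second half of your argument around that tool, following \cite{jac09}, rather than around approximation and limits.
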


\begin{proof}
The proof of the proposition is essentially identical to that of Lemma $14.1$ in \cite{jac09} so we omit it.
\end{proof}

\begin{prop}\label{3.5}
Let $\pi\in\operatorname{Irr}_{\operatorname{ntemp}}\left(G_n(E)\right).$ Then the linear form $l$ on $\mathcal{W}(\pi,\psi_n)$ defined by
\[
l(W)=\Psi(1,W)=\int_{N_n(F) \backslash P_n(F)} W(p)\>d p
\]
is non-trivial. Further, if $\pi$ is distinguished, then $l$ is $G_n(F)$- invariant. 
\end{prop}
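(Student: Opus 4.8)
The statement has two parts: non-triviality of $l$, and — when $\pi$ is distinguished — its $G_n(F)$-invariance. The plan is to dispatch the second quickly using Proposition \ref{3.2}, and to obtain the first by playing $l=\Psi(1,\cdot)$ against the family $\mathcal{J}(\pi)$ through the standard unfolding of the Rankin--Selberg/Flicker integral.

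For the invariance I would first note that the measure on $N_n(F)\backslash P_n(F)$ entering the definition of $\Psi$ is right $P_n(F)$-invariant: the modular character of $P_n(F)$ is, up to sign, $|\det|_F$ of the $\GL_{n-1}$-block, and hence restricts trivially to $N_n(F)$, where it therefore agrees with the trivial modular character of $N_n$. Thus for $p_0\in P_n(F)$ the substitution $p\mapsto pp_0$ gives $l(p_0.W)=\int_{N_n(F)\backslash P_n(F)}W(pp_0)\,dp=l(W)$, so $l$ is a $P_n(F)$-invariant continuous linear form on $\mathcal{W}(\pi,\psi_n)\simeq\pi$. Since $\pi$ is nearly tempered, hence generic by \cite{vs80}, Proposition \ref{3.2} applies once $\pi$ is distinguished and yields that $l$ is $G_n(F)$-invariant. (Continuity of $l$, needed for Proposition \ref{3.2} in the Casselman--Wallach setting, comes from the same estimates that ensure absolute convergence of the defining integral of $l$ for nearly tempered $\pi$.)

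For the non-triviality I would argue by contradiction. First I would record that $L(s,\pi,\mathrm{As})$ is finite and non-zero at $s=1$: it is nowhere zero, being a product of archimedean $\Gamma$-factors, and it is holomorphic on $\operatorname{Re}(s)>1/2$ — otherwise, at a pole $s_1$ with $\operatorname{Re}(s_1)>1/2$, choosing by Theorem \ref{3.1}(3) (valid in the inert case because $\pi$ is nearly tempered) data $W,\Phi$ with $I(s,W,\Phi)/L(s,\pi,\mathrm{As})$ non-vanishing at $s_1$ would force $I(s,W,\Phi)$ to have a pole at $s_1$, against the absolute convergence of $I(s,W,\Phi)$ for $\operatorname{Re}(s)\geq 1/2$ from Theorem \ref{3.1}(1). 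Next I would invoke the unfolding of $I(s,W,\Phi)$: decomposing $G_n(F)=P_n(F)Z_n(F)K_n$ and using $e_n\cdot(pzk)=z_0\,e_n k$ for $z=z_0I_n$, one gets, for $\operatorname{Re}(s)$ in the range of absolute convergence,
\[
I(s,W,\Phi)=c\int_{K_n}\Psi(s,k.W)\left(\int_{F^{\times}}\omega_\pi(a)\,\Phi(a\,e_n k)\,|a|_F^{ns}\,d^{\times}a\right)dk
\]
for a normalization constant $c$, with $\Psi(s,k.W)$ the mirabolic integral of $k.W$ and the inner integral a Tate-type integral (this is the reduction of \cite{jpss83}, in the archimedean setting \cite{jac09}, \cite{bp21}; in the split case $\Psi(s,k.W)$ is replaced by the analogous integral of $W_1W_2$). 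For $\pi$ nearly tempered this identity holds at $s=1$, all three integrals converging absolutely there by Theorem \ref{3.1}(1) and the near-temperedness estimates.

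Now, if $l\equiv 0$ on $\mathcal{W}(\pi,\psi_n)$, then since $k.W\in\mathcal{W}(\pi,\psi_n)$ for every $k\in K_n\subset G_n(F)$ we get $\Psi(1,k.W)=l(k.W)=0$ for all such $k$, and the displayed identity at $s=1$ forces $I(1,W,\Phi)=0$ for all $W\in\mathcal{W}(\pi,\psi_n)$ and all $\Phi\in\mathcal{S}(F^n)$. But Theorem \ref{3.1}(3) supplies $W,\Phi$ with $I(s,W,\Phi)/L(s,\pi,\mathrm{As})$ non-zero at $s=1$, and since $L(1,\pi,\mathrm{As})$ is finite and non-zero this gives $I(1,W,\Phi)\neq 0$, a contradiction; hence $l\not\equiv 0$. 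The main obstacle in this argument is the unfolding identity and, above all, its validity at $s=1$: one must know that for nearly tempered $\pi$ each of the mirabolic integral $\Psi(s,k.W)$, the $K_n$-integral, and the Tate integral over $F^{\times}$ converges absolutely on $\operatorname{Re}(s)\geq 1/2$. This is precisely where near-temperedness — through Theorem \ref{3.1}(1) and the underlying bounds on Whittaker functions — is indispensable, and it is what confines the inert case to nearly tempered representations; the remainder is formal manipulation inside $\mathcal{J}(\pi)$.
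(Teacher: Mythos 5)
Your argument for the invariance statement is the same as the paper's: observe that $l$ is a $P_n(F)$-invariant (continuous) form and apply Proposition \ref{3.2}. For the non-triviality, however, you take a genuinely different route. The paper's proof is a direct construction via the archimedean Kirillov model: choose $f\in C_c^{\infty}(N_n(E)\backslash P_n(E),\psi_n)$ with $\int_{N_n(F)\backslash P_n(F)}f(p)\,dp\neq 0$ and invoke Jacquet \cite{hj10} (split case) and Kemarsky \cite{kem15b} (inert case) to produce $W_0\in\mathcal{W}(\pi,\psi_n)$ whose restriction to $P_n(E)$ is $f$, so that $l(W_0)\neq 0$ immediately. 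You instead argue by contradiction through the Iwasawa unfolding $I(s,W,\Phi)=c\int_{K_n}\Psi(s,k.W)\bigl(\int_{F^{\times}}\omega_\pi(a)\Phi(ae_nk)|a|_F^{ns}\,d^{\times}a\bigr)dk$ at $s=1$, combined with the non-vanishing of Theorem \ref{3.1}(3) at $s_0=1$ and the fact that the archimedean $L$-factor is holomorphic and non-zero at $s=1$ (your internal argument for holomorphy via Theorem \ref{3.1}(1) and (3) is valid, and non-vanishing holds since archimedean factors are products of Gamma functions). This is sound: near-temperedness gives absolute convergence at $s=1$ by Theorem \ref{3.1}(1), Fubini justifies the factorization for almost every $k$, and $l\equiv 0$ would force $I(1,W,\Phi)=0$ for all data, contradicting Theorem \ref{3.1}(3). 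What each approach buys: the paper's Kirillov-model argument is shorter, purely local, and independent of the $L$-function machinery (it would prove non-triviality of the mirabolic period whenever the integral converges), at the cost of citing the nontrivial restriction results of \cite{hj10} and \cite{kem15b}; your argument avoids those references entirely but leans on the deep non-vanishing input of Theorem \ref{3.1}(3) (which in the inert case is exactly where near-temperedness is already needed) and on the unfolding manipulations that the paper itself deploys later in Sections \ref{s4} and \ref{s5}, so it stays within the toolkit of the paper while being somewhat longer. Two small points: the modular character of $P_n(F)$ is $|\det|_F$ of the $\GL_{n-1}$-block up to inversion (not ``sign''), and you could state explicitly that $W|_{P_n(F)}$ is left $N_n(F)$-invariant because $\psi$ is trivial on $F$, which is what makes $l$ well defined and invariant under the unipotent radical of $P_n(F)$.
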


\begin{proof}
 As $\pi$ is nearly tempered, the integrals $\Psi(s,W)$ converge at $s=1$ by Theorem \ref{3.1}($1$). Choose $f\in C_c^{\infty}(N_n(E)\backslash P_n(E),\psi_n)$ such that $$ \int_{N_n(F) \backslash P_n(F)} f(p)\>d p\neq 0. $$
Then by \cite{hj10} (in the split case) and \cite{kem15b} (in the inert case), there exists $W_0\in\mathcal{W}(\pi,\psi_n)$ such that its restriction to $P_n(E)$ agrees with $f$. Hence, $l(W_0)\neq 0$. Finally, if $\pi$ is distinguished, then  as $l$ is $P_n(F)$-invariant, it must be $G_n(F)$-invariant by Proposition \ref{3.2}.
\end{proof}

\begin{remark}\label{3.6}
    Let $\pi\in\operatorname{Irr}_{\operatorname{ntemp}}\left(G_n(E)\right)$ be distinguished.  Then so is $\widetilde{\pi}$. Thus, by Proposition \ref{3.5}, we have a non-trivial $G_n(F)$-invariant linear form $l'$ on $\mathcal{W}(\widetilde{\pi},\psi_n^{-1})$ defined as 
\[
l'(\widetilde{W})=\int_{N_n(F) \backslash P_n(F)} \widetilde{W}(p)\>dp.
\]
\end{remark}
Next, we recall the following continuity result in the split case.

\begin{prop}\label{3.7}
    Let $\pi\in\operatorname{Irr}_{\operatorname{gen}}\left(G_n(F\times F)\right)$. For every $s\in\mathbb{C}$ and $\Phi\in\mathcal{S}$, the linear forms on $\mathcal{W}(\pi,\psi_n)$ defined by
    \[
    \Lambda_{s,\Phi}(W)=\frac{I(s,W,\Phi)}{L(s,\pi,\mathrm{As})}\quad\text{and}\quad
    \Lambda^{\prime}_{s}(W)=\frac{\Psi(s,W)}{L(s,\pi,\mathrm{As})}
    \]
    are continuous.
\end{prop}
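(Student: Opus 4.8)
The plan is to fix $s=s_0$ and $\Phi\in\mathcal S(F^n)$ and prove that $\Lambda_{s_0,\Phi}$ and $\Lambda'_{s_0}$ are continuous on $\mathcal W(\pi,\psi_n)$; allowing $\Phi$ to vary then yields the statement. I would begin in the region of absolute convergence. For $\operatorname{Re}(s)\gg 0$ the integrals defining $\Psi(s,W)$ and $I(s,W,\Phi)$ converge absolutely, and the standard majorization (``gauge'') and rapid-decay estimates for Whittaker functions in the Casselman--Wallach globalisation (as used in \cite{jac09}) show that $W\mapsto\int_{N_{n-1}(F)\backslash G_{n-1}(F)}\bigl|W\begin{pmatrix} g&0\\0&1\end{pmatrix}\bigr|\,|\det g|_F^{\operatorname{Re}(s)-1}\,dg$ and $W\mapsto\int_{N_n(F)\backslash G_n(F)}|W(g)|\,|\Phi(e_ng)|\,|\det g|_F^{\operatorname{Re}(s)}\,dg$ are continuous seminorms on $\mathcal W(\pi,\psi_n)$, bounded uniformly for $W$ in bounded subsets. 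Since $L(s,\pi,\mathrm{As})$ is holomorphic and nowhere vanishing for $\operatorname{Re}(s)\gg 0$, both $\Lambda_{s,\Phi}$ and $\Lambda'_s$ are continuous on a right half-plane $\operatorname{Re}(s)>A$.

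Next I would propagate this to a left half-plane using the functional equation of Theorem \ref{3.1}(2). Let $\gamma(s)=\omega_\pi(\tau)^{n-1}\,|\tau|_E^{\frac{n(n-1)}{2}(s-1/2)}\,\lambda_{E/F}(\psi')^{-\frac{n(n-1)}{2}}\,\epsilon(s,\pi,\mathrm{As},\psi')$ be the proportionality factor appearing there. In the split case $\lambda_{E/F}(\psi')=1$ and, for $\tau=(1,-1)$, $|\tau|_E=1$, while $\omega_\pi(\tau)^{n-1}$ is a nonzero constant and $\epsilon(s,\pi,\mathrm{As},\psi')$ is entire and nowhere vanishing; hence $\gamma(s)$ is entire and nowhere vanishing. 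Since $W\mapsto\widetilde W$ is a topological isomorphism $\mathcal W(\pi,\psi_n)\xrightarrow{\sim}\mathcal W(\widetilde\pi,\psi_n^{-1})$ and $\Phi\mapsto\widehat\Phi$ is continuous on $\mathcal S(F^n)$, the identity $\Lambda^{\widetilde\pi}_{1-s,\widehat\Phi}(\widetilde W)=\gamma(s)\,\Lambda^{\pi}_{s,\Phi}(W)$, combined with the case $\operatorname{Re}(\,\cdot\,)\gg 0$ applied to $\widetilde\pi$ on the half-plane $\operatorname{Re}(1-s)>A$, shows that $\Lambda^{\pi}_{s,\Phi}$ is continuous for $\operatorname{Re}(s)<B:=1-A$. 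The form $\Lambda'_s$ is treated the same way, using Proposition \ref{3.4} — whose proof, that of Lemma 14.1 of \cite{jac09}, exhibits $\Psi(s,W)$ as a combination of members of $\mathcal J(\pi)$ depending continuously on $W$ — to transfer the conclusion from the $\Lambda_{s,\Phi}$.

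The remaining task, and the main obstacle, is the closed vertical strip $B\le\operatorname{Re}(s)\le A$, where the integrals no longer converge. Here I would invoke the meromorphic-continuation argument of Jacquet \cite{jac09}: integrating by parts repeatedly in the torus (equivalently mirabolic) direction, using the $N_n$-equivariance of the Whittaker functions together with the Iwasawa decomposition, one rewrites $\Psi(s,W)$ — and likewise $I(s,W,\Phi)$ — for $\operatorname{Re}(s)$ bounded below by any prescribed constant, as a finite sum of absolutely convergent integrals, each visibly a continuous linear form in $W$, multiplied by explicit scalar factors (products of $\Gamma$-functions and exponentials) whose poles all lie among the poles of $L(s,\pi,\mathrm{As})$; dividing by $L(s,\pi,\mathrm{As})$ cancels those poles, so $\Lambda_{s,\Phi}$ and $\Lambda'_s$ are continuous on the strip, completing the proof. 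An alternative for this last step, once continuity is known on the two half-planes, would be a Phragm\'en--Lindel\"of argument applied to the entire functions $s\mapsto\Lambda_{s,\Phi}(W)$: bounding them on those half-planes uniformly for $W$ in a fixed bounded set and controlling their growth in $\operatorname{Im}(s)$, one extends the bound to the strip, and since a linear form on a Fr\'echet space that is bounded on every bounded set is continuous, each $\Lambda_{s_0,\Phi}$ is continuous; this route, however, requires establishing the requisite growth estimates in $\operatorname{Im}(s)$, so I would prefer the explicit regularization.
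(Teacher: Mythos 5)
The paper disposes of this proposition in one line, by citation: the continuity of $\Lambda_{s,\Phi}$ is Theorem 1.1 of \cite{cp04} (Cogdell--Piatetski-Shapiro), and the continuity of $\Lambda'_s$ is extracted from the proof of Proposition 4.2 of \cite{jc15} (Chai). Your proposal instead tries to reprove these facts from scratch. The outer layers of your argument are sound: continuity in a right half-plane from the gauge/rapid-decay estimates (together with the fact that archimedean $L$- and $\epsilon$-factors are nowhere vanishing), transfer to a left half-plane via the functional equation of Theorem \ref{3.1}(2) and the topological isomorphisms $W\mapsto\widetilde W$, $\Phi\mapsto\widehat\Phi$, and the use of Proposition \ref{3.4} to handle $\Psi(s,W)$ alongside $I(s,W,\Phi)$.

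The gap is exactly where you say the main obstacle lies: the closed critical strip. There your argument consists of the assertion that Jacquet's regularization writes $I(s,W,\Phi)$ and $\Psi(s,W)$ as finite sums of absolutely convergent integrals, ``each visibly a continuous linear form in $W$,'' multiplied by scalar factors ``whose poles all lie among the poles of $L(s,\pi,\mathrm{As})$,'' so that division by $L$ is harmless. That assertion is not something that falls out of \cite{jac09} termwise: in Jacquet's continuation the individual terms of such decompositions can have poles that cancel only in the sum, so ``dividing by $L$ cancels those poles'' is not automatic term by term, and one must in addition check that the resulting bounds are by continuous seminorms on $\mathcal W(\pi,\psi_n)$, locally uniformly in $s$, to get continuity of $\Lambda_{s_0,\Phi}$ at a fixed $s_0$ in the strip. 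Establishing precisely this — continuity of the normalized integral $(W,\Phi)\mapsto I(s,W,\Phi)/L(s,\pi,\mathrm{As})$ on the strip, and of $W\mapsto\Psi(s,W)/L(s,\pi,\mathrm{As})$ — is the content of Theorem 1.1 of \cite{cp04} and of the proof of Proposition 4.2 of \cite{jc15}, i.e.\ of the very results the paper cites. So as written your proposal defers the decisive step to a sketch of the literature rather than proving it; either cite those results directly (as the paper does), or carry out the regularization with explicit continuous-seminorm estimates and an argument that the grouped (not individual) scalar factors divided by $L$ are holomorphic on the strip. Your Phragm\'en--Lindel\"of alternative is viable in principle (Fr\'echet spaces are bornological, so boundedness on bounded sets does give continuity), but, as you note, it requires vertical growth estimates you have not supplied, so it does not close the gap either.
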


\begin{proof}
    The continuity of $\Lambda_{s,\Phi}$ follows from Theorem $1.1$ of \cite{cp04}, and that of $\Lambda^{\prime}_{s}$ follows from the proof of Proposition $4.2$ of \cite{jc15}.
\end{proof}

We end this section with the following non-vanishing result which emerges directly from Lemma $3.3.3$ of \cite{bp21}. 

\begin{prop}\label{3.8}
Let $\pi\in\operatorname{Irr}_{\operatorname{gen}}(G_n(E))$. Then for every $s_0\in\mathbb{C}$, there exists $W_1 \in \mathcal{W}(\pi, \psi_n)$ and $\Phi_1 \in \mathcal{S}^1$ such that the function $s\mapsto I(s,W_1,\Phi_1)$ does not vanish at $s=s_0$.   
\end{prop}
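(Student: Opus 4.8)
The plan is to reduce the statement to the analogous non-vanishing statement for the integrals without the Schwartz data (Theorem \ref{3.1}(3)), by a two-step argument that exploits the freedom in the choice of $\Phi$. First, fix $s_0 \in \mathbb{C}$. By Theorem \ref{3.1}(3) in the split case (and also in the inert case under the nearly tempered hypothesis — but here we want \emph{all} generic $\pi$, so I would instead proceed directly), we know that the quotient $I(s,W,\Phi)/L(s,\pi,\mathrm{As})$ can be made non-vanishing at $s_0$ for a suitable choice of $W$ and $\Phi$; however, the $\Phi$ produced there need not lie in $\mathcal{S}^1$. The point of invoking Lemma 3.3.3 of \cite{bp21} is precisely that it supplies, for an arbitrary point $s_0$, a pair $(W_1, \Phi_1)$ with $\Phi_1$ \emph{vanishing at the origin} — i.e. $\Phi_1 \in \mathcal{S}^1$ — for which $I(s_0, W_1, \Phi_1) \neq 0$. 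So the proof is essentially a matter of unwinding that lemma and checking that the Schwartz function it produces can be arranged to vanish at $0$.

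The key steps, in order, would be: (i) recall from \cite{bp21} that for $\mathrm{Re}(s)$ large the integral $I(s,W,\Phi)$ unfolds, using the Iwasawa decomposition $\GL_n(F) = N_n(F) A_n(F) K_n$ and the $P_n$-equivariance of $W$, into an expression in which the dependence on $\Phi$ enters through its restriction to a neighbourhood of the line $F^\times e_n$; (ii) observe that the value $\Phi(e_n)$ — equivalently the behaviour of $\Phi$ along the last coordinate axis away from $0$ — is what drives the leading term, and that one may choose $\Phi_1$ supported away from the hyperplane $x_n = 0$ (in particular away from $0$), hence $\Phi_1 \in \mathcal{S}^m$ for all $m$, a fortiori $\Phi_1 \in \mathcal{S}^1$; (iii) with such a $\Phi_1$ fixed, use the genericity of $\pi$ and the density of Whittaker functions restricted to $P_n(F)$ (Kirillov-model type arguments, or Lemma 3.3.3 of \cite{bp21} directly) to select $W_1 \in \mathcal{W}(\pi,\psi_n)$ making the resulting integral non-zero at $s = s_0$; (iv) finally, invoke meromorphic continuation (Theorem \ref{3.1}(1)) to pass from "$\mathrm{Re}(s) \gg 0$" to the arbitrary point $s_0$, noting that non-vanishing at a single point of a meromorphic family is an open condition that can be secured by perturbing $W_1$ within the finitely-generated $(\mathfrak{g},K)$-module structure if necessary.

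Concretely, I expect the cleanest route is: choose $\Phi_1 = \widehat{\Phi_0}$ or a translate/dilate of a fixed bump function so that $\Phi_1(0) = 0$ but $\Phi_1$ does not vanish identically on $F^\times e_n$; then the map $W \mapsto I(s_0, W, \Phi_1)$ is, up to the holomorphic factor coming from the $\Phi_1$-integration, a non-zero functional on $\mathcal{W}(\pi,\psi_n)$ precisely because it dominates the local zeta integral $\Psi(s_0 + c, W)$ attached to $P_n$ for an appropriate shift $c$, and the latter is non-zero for generic $\pi$ by the theory of the Kirillov model together with Theorem \ref{3.1}(3).

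The main obstacle will be step (iv) combined with controlling poles: at the specific point $s_0$ the $L$-factor $L(s,\pi,\mathrm{As})$ (or individual integrals) may have a zero or pole, and one must ensure that the non-vanishing of $I(s, W_1, \Phi_1)$ survives at $s_0$ itself rather than merely generically. This is exactly the content that \cite[Lemma 3.3.3]{bp21} is designed to handle — it gives the statement at every $s_0$, with no exceptional points — so the honest version of the proof is a short paragraph citing that lemma and remarking that the Schwartz function it produces may be taken in $\mathcal{S}^1$ (indeed supported away from the origin), because the unfolded integral only sees $\Phi$ near $F^\times e_n$. I would write the proof in that compressed form, with the verification that $\Phi_1 \in \mathcal{S}^1$ can be arranged as the one line of genuine content.
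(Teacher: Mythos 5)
Your proposal takes essentially the same route as the paper, which gives no independent argument and simply deduces Proposition \ref{3.8} directly from Lemma 3.3.3 of \cite{bp21}, the point being exactly the one you identify: the Schwartz function produced there can be taken supported away from the origin, hence in $\mathcal{S}^1$. (One minor correction to your heuristic: the unfolded integral sees $\Phi$ on all of $e_n G_n(F)=F^n\setminus\{0\}$, not merely near the line $F^{\times}e_n$, but this does not affect the argument.)
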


\section{Proof of Theorem 1.1 : Split Case}\label{s4}

Let $\pi=\pi_1\otimes\pi_2\in\operatorname{Irr}_{\operatorname{gen}}\left(G_n(F\times F)\right)$. We remind the reader that the Rankin-Selberg $L$-function traditionally denoted by $L(s,\pi_1\times\pi_2)$ is denoted by $L(s,\pi,\mathrm{As})$ in this paper, so that the split and inert cases can be treated uniformly.

We only need to prove that if $\pi$ is distinguished, then $s=0$ is an exceptional pole of level $0$ for the family $\mathcal{J}(\pi).$ For $\operatorname{Re}(s)<<0$, consider the integral
\[
I(1-s, \widetilde{W}, \widehat{\Phi})=\int_{N_n(F) \backslash G_n(F)} \widetilde{W}(g) \widehat{\Phi}(e_n g)|\operatorname{det} g|_F^{1-s} \>d g,
\]
where $W \in \mathcal{W}(\pi, \psi_n)$ and $\Phi \in \mathcal{S}\left(F^n\right)$.
We assume that if $W$ has the tensor product decomposition $W_1\otimes W_2$ for some Whittaker functions $W_1$ and $W_2$ of representations $\pi_1$ and $\pi_2$ respectively, then both $W_1$ and $W_2$ are $K_n$-finite. Therefore, there exist finitely many Whittaker functions $W_l'$ (for $\pi_1$), $W_j''$ (for $\pi_2$) and continuous functions $h_l,g_j$ on $K_n$ such that
\[\widetilde{W_1}(gk)=\sum\limits_{l}h_l(k)\widetilde{W}_l'(g),\>\>\widetilde{W_2}(gk)=\sum\limits_{j}g_j(k)\widetilde{W}_j''(g)\]
for all $k\in K_n$, and $g\in G_n(F).$ Set $f_i=h_lg_j$ and $W_i=W_l'\otimes W_j''$, where $i$ varies as the indices $j$ and $l$ vary. Then 
\[\widetilde{W}(gk)=\sum\limits_{i}f_i(k)\widetilde{W}_i(g).\]
As $\pi$ is distinguished, its central character $\omega_{\pi}$ is trivial on the diagonal of $F^{\times}\times F^{\times}$.
Using the Iwasawa decomposition, we get
\[
I(1-s, \widetilde{W}, \widehat{\Phi})=\int\limits_{K_n}\int\limits_{N_n(F)\backslash P_n(F)}\widetilde{W}(pk)\left|\operatorname{det}(p)\right|^{-s}\>dp\int\limits_{{F}^*}\widehat{\Phi}(e_nak)|a|^{n(1-s)}d^{\times}a dk.
\]
Dividing by the $L$-function, we get
\begin{align*}
   \frac{I(1-s, \widetilde{W}, \widehat{\Phi})}{L(1-s,\widetilde{\pi},\mathrm{As})}&=\int\limits_{K_n}\frac{\Psi(1-s,\>\widetilde{\pi}(k)\widetilde{W})}{L(1-s,\widetilde{\pi},\mathrm{As})}\int\limits_{{F}^*}\widehat{\Phi}(e_nak)|a|^{n(1-s)}d^{\times}a dk.\\
   &=\sum\limits_{i}\frac{\Psi(1-s,\>\widetilde{W}_i)}{L(1-s,\widetilde{\pi},\mathrm{As})}\int\limits_{K_n}\int\limits_{{F}^*}f_i(k)\widehat{\Phi}(e_nak)|a|^{n(1-s)}d^{\times}a dk.
\end{align*}
Notice that there is a positive real number $\epsilon$ such that for $\operatorname{Re}(s)<\epsilon$,
$$\int\limits_{K_n}\int\limits_{{F}^*}f_i(k)\widehat{\Phi}(e_nak)|a|^{n(1-s)}d^{\times}a dk$$
is absolutely convergent and defines a holomorphic function for each $i$.
So, in the previous equation, we have an equality of analytic functions for $\operatorname{Re}(s)<\epsilon$. Substituting $s=0$, we obtain
 $$\frac{I(1, \widetilde{W}, \widehat{\Phi})}{L(1,\widetilde{\pi},\mathrm{As})}=\int\limits_{K_n}\frac{\Psi(1,\>\widetilde{\pi}(k)\widetilde{W})}{L(1,\widetilde{\pi},\mathrm{As})}\int\limits_{{F}^*}\widehat{\Phi}(e_nak)|a|^{n}d^{\times}a dk.$$
As $\pi$ is distinguished, so is $\widetilde{\pi}$. Therefore, by Proposition \ref{3.2}, the $P_n(F)$-invariant linear form 
$$ \widetilde{W}\mapsto \frac{\Psi(1,\widetilde{W})}{L(1,\widetilde{\pi},\mathrm{As})}$$
on $\mathcal{W}(\widetilde{\pi}, \psi_n^{-1})$ must be $G_n(F)$-invariant. Thus, we obtain
\begin{align*}
     \frac{I(1, \widetilde{W}, \widehat{\Phi})}{L(1,\widetilde{\pi},\mathrm{As})}&=\frac{\Psi(1,\widetilde{W})}{L(1,\widetilde{\pi},\mathrm{As})}\int\limits_{K_n}\int\limits_{{F}^*}\widehat{\Phi}(e_nak)|a|^{n}d^{\times}a dk\\
     &=\frac{\Psi(1,\widetilde{W})}{L(1,\widetilde{\pi},\mathrm{As})}\Phi(0).
\end{align*}
As the space of $K_n\times K_n$-finite Whittaker functionals is dense in $\mathcal{W}({\pi}, \psi_n)$, by Proposition  \ref{3.7}, the equality
\[\frac{I(1, \widetilde{W}, \widehat{\Phi})}{L(1,\widetilde{\pi},\mathrm{As})}=\frac{\Psi(1,\widetilde{W})}{L(1,\widetilde{\pi},\mathrm{As})}\Phi(0)\]
must hold for all $W\in\mathcal{W}(\pi, \psi_n),\>\Phi\in\mathcal{S}$. 
Using Theorem \ref{3.1}(2) at $s=0$, we get
\[
\frac{I(1, \widetilde{W}, \widehat{\Phi})}{L(1, \widetilde{\pi}, \mathrm{As})}=\alpha\>\frac{I(0, W, \Phi)} {L(0, \pi, \mathrm{As})}
\]
for some $\alpha\in\mathbb{C}^{\times}$. From the earlier calculation, the left hand side of the above equation vanishes for any $\Phi\in\mathcal{S}^1$, and hence, ${I(0,W,\Phi)}/{L(0, \pi, \mathrm{As})}=0$ for all $W\in\mathcal{W}(\pi,\psi_n),\>\Phi\in\mathcal{S}^1$. Using Proposition \ref{3.8}, we choose $W_1$ and $\Phi_1$ in such a way that $\Phi_1\in\mathcal{S}^1$ and the function $s\mapsto I(s, W_1, \Phi_1)$ does not vanish at $s=0$. But as $I(0, W_1, \Phi_1)/L(0, \pi, \mathrm{As})=0$, it follows that $s=0$ is a pole for $L(s, \pi, \mathrm{As})$, say of order $d$. By Theorem \ref{3.1}(3), $s=0$ must be a pole for the family $\mathcal{J}(\pi)$ of maximal order $d$ . Observe that
\[B_{0}(W,\Phi)=\left(\lim_{s\to 0} s^d L(s,\pi,\mathrm{As})\right)\frac{I(0,W,\Phi)}{L(0, \pi, \mathrm{As})}\]
for all $W\in\mathcal{W}(\pi,\psi_n),\>\Phi\in\mathcal{S}$. By the above discussion, the bilinear form $B_0$ is non-trivial and vanishes on $\mathcal{S}^1$. Therefore, $s=0$ must be an exceptional pole of level $0$ for the family $\mathcal{J}(\pi)$.

\section{Proof of Theorem 1.1 : Inert Case}\label{s5}

We only need to prove that if $\pi$ is nearly tempered and distinguished, then $s=0$ is an exceptional pole of level $0$ for the family $\mathcal{J}(\pi).$
    
We make suitable choices of Haar measures $dg$, $dp$ and $\mu_2$ on $N_n(F) \backslash G_n(F)$, $N_n(F) \backslash P_n(F)$ and $P_n(F) \backslash G_n(F)$ respectively (see the proof of Theorem 1.4 in \cite{akt04} for details).

Consider
\[
I(1, \widetilde{W}, \widehat{\Phi})=\int_{N_n(F) \backslash G_n(F)} \widetilde{W}(g) \widehat{\Phi}(e_n g)|\operatorname{det} g|_F d g,
\]
where $W \in \mathcal{W}(\pi, \psi_n)$ and $\Phi \in \mathcal{S}\left(F^n\right)$. This is well defined as $\pi$ is nearly tempered. Following the argument in \cite{akt04},
\[
\begin{aligned}
I(1, \widetilde{W}, \widehat{\Phi}) & =\int_{P_n(F) \backslash G_n(F)} \int_{N_n(F) \backslash P_n(F)} \widetilde{W}(p g) \widehat{\Phi}(e_n p g) dp d\mu_2(g) \\
& =\int_{P_n(F) \backslash G_n(F)} \widehat{\Phi}(e_n g)\left(\int_{N_n(F) \backslash P_n(F)} \widetilde{W}(p g) dp\right) d\mu_2(g) \\
& =\int_{P_n(F) \backslash G_n(F)} \widehat{\Phi}(e_n g)\left(\int_{N_n(F) \backslash P_n(F)}(\widetilde{\pi}(g) \widetilde{W})(p) dp\right) d\mu_2(g) \\
& =\int_{P_n(F) \backslash G_n(F)} \widehat{\Phi}(e_n g) d\mu_2(g) \times\left(\int_{N_n(F) \backslash P_n(F)} \widetilde{W}(p) dp\right) \\
& =c \cdot \Phi(0) \cdot \ell^{\prime}(\widetilde{W}),
\end{aligned}
\]
where $c$ is a non-zero constant. Note that the fourth equality is obtained because we know that $l'$ gives a $G_n(F)$-invariant linear form on $\widetilde{\pi}$ by Remark \ref{3.6}. The integral over $P_n(F) \backslash G_n(F)$ equals $\Phi(0)$ by Fourier inversion, since $P_n(F) \backslash G_n(F)\cong F^n\setminus \{0\}$ and $d\mu_2$ is a Haar measure on $F^n$.

Using Theorem \ref{3.1}(2) at $s=0$, we get
\[
\frac{I(1, \widetilde{W}, \widehat{\Phi})}{L(1, \widetilde{\pi}, \mathrm{As})}=\alpha\>\frac{I(0, W, \Phi)} {L(0, \pi, \mathrm{As})}
\]
for some $\alpha\in\mathbb{C}^{\times}$. From the earlier calculation, the left hand side of the above equation vanishes for any $\Phi\in\mathcal{S}^1$, and hence, ${I(0,W,\Phi)}/{L(0, \pi, \mathrm{As})}=0$ for all $W\in\mathcal{W}(\pi,\psi_n),\>\Phi\in\mathcal{S}^1$. Using Proposition \ref{3.8}, we choose $W_1$ and $\Phi_1$ in such a way that $\Phi_1\in\mathcal{S}^1$ and the function $s\mapsto I(s, W_1, \Phi_1)$ does not vanish at $s=0$. But as $I(0, W_1, \Phi_1)/L(0, \pi, \mathrm{As})=0$, it follows that $s=0$ is a pole for $L(s, \pi, \mathrm{As})$, say of order $d$. As $\pi$ is nearly tempered, by Theorem \ref{3.1}(3), $s=0$ must be a pole for the family $\mathcal{J}(\pi)$ of maximal order $d$ . Observe that

\[B_{0}(W,\Phi)=\left(\lim_{s\to 0} s^d L(s,\pi,\mathrm{As})\right)\frac{I(0,W,\Phi)}{L(0, \pi, \mathrm{As})}\]
for all $W\in\mathcal{W}(\pi,\psi_n),\>\Phi\in\mathcal{S}$. By the above discussion, the bilinear form $B_0$ is non-trivial and vanishes on $\mathcal{S}^1$. Therefore, $s=0$ must be an exceptional pole of level $0$ for the family $\mathcal{J}(\pi)$.

\section{Results on Exceptional Poles}\label{s6}
    In this section, we will prove some results on archimedean exceptional poles.
We begin by recalling the notion of archimedean conductor exponent (\cite{ph20}). 

    Let $\pi$ be an irreducible generic representation of $G_n(F)$ with representation space $V_{\pi}$. Let $\widehat{K_n}$ denote the set of equivalence class of irreducible representations of $K_n$. We call $\tau \in \widehat{K_n}$ a $K_n$-type of $\pi$ if $\operatorname{Hom}(\tau,\pi|_{K_n})$ is non-trivial. Let $V_{\pi}^{\tau}$ denote the $\tau$-isotypic subspace of $V_{\pi}$. Then set
    \[V_{\pi}^{\tau|_{K_{n-1}}}\coloneqq\left\{v \in V_\pi^\tau: \pi\begin{pmatrix}
k^{\prime} & 0 \\
0 & 1
\end{pmatrix}\cdot v=v \text { for all } k^{\prime} \in K_{n-1}\right\} .\]
Let $\operatorname{deg}(\tau)$ denote the Howe degree of $\tau\in\widehat{K_n}$ (see  \cite{rh89} or $4.2$ of \cite{ph20} for the definition). It is a non-negative integer.

\begin{defn}
   The archimedean conductor exponent is defined as
   $$ c(\pi)\coloneqq \operatorname{min}\{\operatorname{deg}(\tau): \tau\in \widehat{K_n} \text{ is a } K_n \text{-type and } V_{\pi}^{\tau|_{K_{n-1}}}\neq\{0\}\}.$$   
\end{defn}

\begin{remark}
 For a representation $\pi=\pi_1\otimes\pi_2\in\operatorname{Irr}_{\operatorname{gen}}\left(G_n(F\times F)\right)$, we define $c(\pi)$ to be $c(\pi_1)+c(\pi_2)$.   
\end{remark}
   
 In \cite{py21}, Humphries and Jo establish that there exist explicit choices of Whittaker functions and Schwartz functions for which the Rankin-Selberg (resp. Flicker) integral is a polynomial multiple of the Rankin-Selberg (resp. Asai) $L$-function.

 To a representation $\rho\in\operatorname{Irr}_{\operatorname{gen}}\left(G_n(F)\right)$, one can attach a spherical representation $\rho_{ur}$ of $G_n(F)$ such that $L(s,\rho)=L(s,\rho_{ur})$ (Proposition 4.2 of \cite{py21}). Let $\pi\in\operatorname{Irr}_{\operatorname{gen}}\left(G_n(E)\right)$. We associate a meromorphic function $p_{\pi}(s)$ to $\pi$ as follows.

$$p_{\pi}(s)\coloneqq\begin{dcases}\frac{L(s,\pi_{ur},\mathrm{As})}{L(s,\pi,\mathrm{As})} & \text { if } E/F \text{ is a quadratic extension}. \\ 
       \frac{L(s,{\pi_{1}}_{ur}\times{\pi_{2}}_{ur})}{L(s,\pi,\mathrm{As})} & \text { if } E=F\times F,\>\pi=\pi_1\otimes\pi_2.\end{dcases}.$$

By Proposition $5.8$ (split case) and Proposition $6.12$ (inert case) in \cite{py21}, $p_{\pi}(s)$ is in fact a polynomial in $s$.

    \begin{thm}\label{5.1}
        For every irreducible generic representation $\pi$ of $G_n(E)$, there exists $W_0\in\mathcal{W}(\pi,\psi_n)$ and $\Phi_0\in\mathcal{S}^{c(\pi)}$ such that $$\frac{I(s,W_0,\Phi_0)}{L(s, \pi, \mathrm{As})}=\begin{cases}p_{\pi}(s) & \text { if } E/F \text{ is a quadratic extension}. \\ 
       \delta_{n}(a(\beta))|\operatorname{det}(a(\beta))|^{-s}p_{\pi}(s) & \text { if } E=F\times F,\>\tau=(\beta,-\beta).\end{cases},$$
        where $a(\beta)$ is the diagonal matrix $\operatorname{diag}(\beta^{n-1},...,\beta,1)\in G_n(F)$.          
     \end{thm}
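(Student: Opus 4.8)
The plan is to reduce everything to the work of Humphries and Jo in \cite{py21} and the archimedean newform theory encoded in the conductor exponent $c(\pi)$, treating the two cases in parallel. First I would recall from \cite{py21} that for an irreducible generic representation $\rho$ of $G_n(F)$ there is an explicit \emph{weak test vector} $W_\rho$ in the Whittaker model together with a Schwartz function $\Phi_\rho$, supported so as to lie in $\mathcal{S}^{c(\rho)}$, for which the Rankin--Selberg integral computing $L(s,\rho)$ against the spherical vector of $\rho_{ur}$ equals $L(s,\rho_{ur})$; dividing by $L(s,\rho)$ turns this into the polynomial $L(s,\rho_{ur})/L(s,\rho)$. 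In the inert case, the Flicker/Asai integral $I(s,W,\Phi)$ is literally the archimedean Asai-type integral studied in \cite{py21}, and their Theorem (the Asai analogue of Proposition 4.2 there) produces $W_0\in\mathcal{W}(\pi,\psi_n)$ and $\Phi_0\in\mathcal{S}(F^n)$ with $\Phi_0$ vanishing to order $c(\pi)$ at the origin such that $I(s,W_0,\Phi_0)=L(s,\pi_{ur},\mathrm{As})$; dividing by $L(s,\pi,\mathrm{As})$ gives exactly $p_\pi(s)$. This disposes of the first case.

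For the split case, recall that $I(s,W,\Phi)$ with $W=W_1\otimes W_2$ is the Rankin--Selberg convolution integral $\int_{N_n(F)\backslash G_n(F)} W_1(g)W_2(g)\Phi(e_ng)|\det g|_F^s\,dg$, and $L(s,\pi,\mathrm{As})=L(s,\pi_1\times\pi_2)$. Here one must account for the discrepancy between the character $\psi$ on $E=F\times F$ and the characters $\psi'$ used to define the Whittaker models of $\pi_1$ and $\pi_2$: with $\tau=(\beta,-\beta)$, the Whittaker functions $W_1,W_2$ produced by \cite{py21} are normalized with respect to $\psi'_n$ and ${\psi'_n}^{-1}$ after a change of variables $g\mapsto a(\beta)g$ where $a(\beta)=\operatorname{diag}(\beta^{n-1},\dots,\beta,1)$, since $a(\beta)$ conjugates the standard generic character to its $\beta$-twist. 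Performing this substitution in the integral, the Jacobian and the shift in $W_1(a(\beta)g)W_2(a(\beta)g)\Phi(e_na(\beta)g)|\det(a(\beta)g)|_F^s$ produces precisely the factor $\delta_n(a(\beta))|\det(a(\beta))|^{-s}$ multiplying the "untwisted" Rankin--Selberg integral, which by \cite{py21} (their Rankin--Selberg weak test vector result, i.e.\ the $G_n\times G_n$ case) equals $L(s,{\pi_1}_{ur}\times{\pi_2}_{ur})$ when $W_i$ and $\Phi$ are chosen appropriately, with $\Phi$ lying in $\mathcal{S}^{c(\pi_1)+c(\pi_2)}=\mathcal{S}^{c(\pi)}$. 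Dividing by $L(s,\pi,\mathrm{As})$ then yields $\delta_n(a(\beta))|\det(a(\beta))|^{-s}p_\pi(s)$, as claimed. One should also check that $\Phi_0$ can be taken in $\mathcal{S}^{c(\pi)}$ rather than merely $\mathcal{S}$: this follows because the weak test vector construction in \cite{py21} uses a Schwartz function whose order of vanishing at $0$ is governed by the Howe degrees of the relevant $K_n$-types, matching the definition of $c(\pi)$.

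The main obstacle I anticipate is the bookkeeping in the split case: one has to make the change of variables $g\mapsto a(\beta)g$ cleanly, verify that it intertwines the $\psi$-Whittaker model of $\pi_1\otimes\pi_2$ with the pair of $\psi'$-Whittaker models appearing in \cite{py21}, and confirm that the resulting power of $|\det a(\beta)|$ and the modular factor $\delta_n(a(\beta))$ are exactly as stated — in particular that no stray factor of $\omega_\pi(\beta)$ survives. A secondary point requiring care is that the weak test vector Schwartz function in \cite{py21} genuinely vanishes to order $c(\pi)$ at the origin; this is where the precise definition of the archimedean conductor exponent via $V_\pi^{\tau|_{K_{n-1}}}$ and the Howe degree enters, and it is the only place the hypothesis $\Phi_0\in\mathcal{S}^{c(\pi)}$ (rather than $\Phi_0\in\mathcal{S}$) is used. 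Everything else is a direct citation of \cite{py21} combined with the unfolding already recorded in Section \ref{s3}.
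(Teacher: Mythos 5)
Your proposal is correct and follows essentially the same route as the paper: both reduce the statement to the weak test vector results of Humphries--Jo in \cite{py21} (the Asai case for the inert extension, the Rankin--Selberg case for the split case with $\beta=1$, with $\Phi_0\in\mathcal{S}^{c(\pi)}$ coming directly from those results), and then treat general $\tau=(\beta,-\beta)$ by left-translating the Whittaker function by $a(\beta)$ and changing variables, which yields exactly the factor $\delta_n(a(\beta))\,|\det(a(\beta))|^{-s}$.
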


    \begin{proof}
        The result follows in the case when $E/F$ is a quadratic extension by Theorem $6.10$ and Proposition $6.12$ of \cite{py21} and in the case when $E=F\times F$ and $\beta=1$ by Theorem $5.6$ and Proposition $5.8$ of \cite{py21}. 

        Note that for $\psi_0(x,y):=\psi^{\prime}(x)\psi^{\prime}(-y)$ and $W_0\in\mathcal{W}(\pi,\psi_0)$ corresponding to $\beta=1$ case, we must have that $W_0^{\prime}$ defined as
        $$ {W_0}^{\prime}(g)=W_0(a(\beta)g),\>g\in G_n(E) $$
        lies in $\mathcal{W}(\pi,\psi_n)$, where $\psi(x,y)=\psi^{\prime}(\beta x)\psi^{\prime}(-\beta y)$. 
        
        Then by the change of variable $h\mapsto a(\beta)^{-1}h$,
        \[\frac{I(s,{W_0}^{\prime},\Phi_0)}{L(s, \pi, \mathrm{As})}=\delta_{n}(a(\beta))|\operatorname{det}(a(\beta))|^{-s}\frac{I(s,{W_0},\Phi_0)}{L(s, \pi, \mathrm{As})}. \]
     This completes the proof.   
    \end{proof}

Let $\pi$ be an irreducible generic representation of $G_n(E)$, with the assumption that it is nearly tempered in the inert case. The following results are deduced as a consequence of Theorem \ref{5.1}.

\begin{thm}\label{5.2}
     Suppose that the family $\mathcal{J}(\pi)$ has an exceptional pole of level $m$ at some complex number $s_0$ and $c(\pi)>m$. Then $p_{\pi}(s_0)=0$.
\end{thm}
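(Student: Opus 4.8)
\textbf{Proof proposal for Theorem \ref{5.2}.}

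The plan is to exploit Theorem \ref{5.1}, which produces an explicit pair $(W_0,\Phi_0)$ with $\Phi_0\in\mathcal{S}^{c(\pi)}$ and $I(s,W_0,\Phi_0)/L(s,\pi,\mathrm{As})$ equal to $p_\pi(s)$ (up to the harmless factor $\delta_n(a(\beta))|\det a(\beta)|^{-s}$ in the split case, which is an entire non-vanishing function of $s$ and hence does not affect vanishing statements). First I would expand both sides of the identity of Theorem \ref{5.1} as Laurent series at $s=s_0$. Since $s_0$ is a pole of the family $\mathcal{J}(\pi)$ of maximal order $d$, write $L(s,\pi,\mathrm{As})=c_{-d}(s-s_0)^{-d}+\cdots$ with $c_{-d}\neq 0$, so that
\[
I(s,W_0,\Phi_0)=\frac{B_{s_0}(W_0,\Phi_0)}{(s-s_0)^d}+\cdots,
\qquad
B_{s_0}(W_0,\Phi_0)=c_{-d}\,p_\pi(s_0)\cdot u(s_0),
\]
where $u(s)=1$ in the inert case and $u(s)=\delta_n(a(\beta))|\det a(\beta)|^{-s}$ in the split case; in particular $u(s_0)\neq 0$. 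Thus it suffices to show $B_{s_0}(W_0,\Phi_0)=0$.

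The key point is then that $\Phi_0\in\mathcal{S}^{c(\pi)}$ while $s_0$ is an exceptional pole of level $m$, meaning $B_{s_0}$ vanishes identically on $\mathcal{W}(\pi,\psi_n)\times\mathcal{S}^{m+1}$. Since $c(\pi)>m$, i.e. $c(\pi)\geq m+1$, we have the inclusion $\mathcal{S}^{c(\pi)}\subseteq\mathcal{S}^{m+1}$, so $B_{s_0}(W_0,\Phi_0)=0$. Combining this with the displayed identity forces $c_{-d}\,p_\pi(s_0)\,u(s_0)=0$, and since $c_{-d}\neq 0$ and $u(s_0)\neq 0$, we conclude $p_\pi(s_0)=0$.

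One technical point I would make explicit is the legitimacy of reading off the leading Laurent coefficient of the \emph{quotient} $I(s,W_0,\Phi_0)/L(s,\pi,\mathrm{As})$ as $p_\pi(s_0)u(s_0)$: by Theorem \ref{3.1}(3) this quotient is entire, so evaluating it at $s=s_0$ is unambiguous, and multiplying the Laurent expansion of $L$ by this holomorphic quotient gives precisely $B_{s_0}(W_0,\Phi_0)=\big(\lim_{s\to s_0}(s-s_0)^d L(s,\pi,\mathrm{As})\big)\cdot\big(p_\pi(s_0)u(s_0)\big)$, exactly as in the displayed formula for $B_0$ in Sections \ref{s4} and \ref{s5}. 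In the inert case I would also recall that the hypothesis ``nearly tempered'' is what guarantees (via Theorem \ref{3.1}(3)) that $L(s,\pi,\mathrm{As})$ is genuinely the order-of-pole controlling object for $\mathcal{J}(\pi)$, so that $d$ is well defined and $c_{-d}\neq 0$. I do not anticipate a serious obstacle here; the only thing to be careful about is bookkeeping the split-case twisting factor $u(s)$ and confirming it is holomorphic and non-vanishing at $s_0$, which is immediate since $\delta_n(a(\beta))$ is a positive constant and $|\det a(\beta)|^{-s}$ is entire and zero-free.
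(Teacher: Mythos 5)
Your proposal is correct and follows essentially the same route as the paper: both rest on the test vector $(W_0,\Phi_0)$ from Theorem \ref{5.1}, the inclusion $\mathcal{S}^{c(\pi)}\subseteq\mathcal{S}^{m+1}$, the identity $B_{s_0}(W,\Phi)=\bigl(\lim_{s\to s_0}(s-s_0)^d L(s,\pi,\mathrm{As})\bigr)\,I(s_0,W,\Phi)/L(s_0,\pi,\mathrm{As})$, and the non-vanishing of the leading coefficient guaranteed by Theorem \ref{3.1}(3) (nearly tempered in the inert case). The only difference is presentational --- you argue directly while the paper argues by contradiction --- and your explicit bookkeeping of the split-case factor $\delta_n(a(\beta))|\det a(\beta)|^{-s}$ is a harmless refinement.
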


\begin{proof}
    Pick $W_0\in\mathcal{W}(\pi,\psi_n)$ and $\Phi_0\in\mathcal{S}^{c(\pi)}$ as in Theorem \ref{5.1}. Assume to the contrary that $p_{\pi}(s_0)\neq 0$. 

As $c(\pi)>m$, $\Phi_0\in\mathcal{S}^{m+1}$, so the quotient $I(s_0,W,\Phi)/L(s_0, \pi, \mathrm{As})$ is not identically zero for $\Phi\in\mathcal{S}^{m+1}$ as ${I(s_0,{W_0},\Phi_0)}/{L(s_0, \pi, \mathrm{As})}\neq 0$. Let $d$ be the maximal order of the pole $s=s_0$ of the family $\mathcal{J}(\pi)$. Then,
\[B_{s_0}(W,\Phi)=\left(\lim_{s\to s_0} (s-s_0)^d L(s,\pi,\mathrm{As})\right)\frac{I(s_0,W,\Phi)}{L(s_0, \pi, \mathrm{As})}\]
for all $W\in\mathcal{W}(\pi,\psi_n),\>\Phi\in\mathcal{S}$. By Theorem \ref{3.1}(3), the limit in the  above equation is non-zero. Hence, the bilinear form $B_{s_0}$ does not vanish identically for $\Phi\in\mathcal{S}^{m+1}$, leading to a contradiction.
\end{proof}

Analogous to the non-archimedean theory of the conductor exponent, the archimedean conductor exponent $c(\pi)$ is zero if and only if $\pi$ is unramified. Therefore, we obtain:

\begin{corollary}\label{5.3}
   Suppose that $\pi$ is ramified and the family $\mathcal{J}(\pi)$ has an exceptional pole of level $0$ at some complex number $s_0$. Then $p_{\pi}(s_0)=0$. 
\end{corollary}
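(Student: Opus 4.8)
The plan is to obtain Corollary \ref{5.3} as an immediate specialization of Theorem \ref{5.2}. The only genuine ingredient beyond that theorem is the dictionary between ramification and the archimedean conductor exponent, namely that $c(\pi)=0$ if and only if $\pi$ is unramified. Granting this, the argument is short: since $c(\pi)$ is by definition a non-negative integer (it is a minimum of Howe degrees, each of which is a non-negative integer), the hypothesis that $\pi$ is ramified forces $c(\pi)\geq 1$.

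Concretely, I would proceed as follows. First, fix the exceptional pole $s_0$ of level $m=0$ guaranteed by hypothesis. Next, invoke the characterization of ramification recalled just before the statement: because $\pi$ is ramified we have $c(\pi)\neq 0$, hence $c(\pi)\geq 1>0=m$. In the inert case this uses $c(\pi)$ directly; in the split case $\pi=\pi_1\otimes\pi_2$, where by convention $c(\pi)=c(\pi_1)+c(\pi_2)$, and $\pi$ being ramified means at least one $\pi_i$ is ramified, so again $c(\pi)\geq 1$. With the inequality $c(\pi)>m$ in hand, Theorem \ref{5.2} applies verbatim and yields $p_{\pi}(s_0)=0$, which is exactly the assertion.

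There is essentially no obstacle here beyond correctly matching hypotheses: the substantive content has already been packaged into Theorem \ref{5.2} (which in turn rests on the weak test vector construction of Theorem \ref{5.1} and the non-vanishing statement Theorem \ref{3.1}(3), the latter being where the nearly tempered assumption in the inert case is used). If anything, the point worth a sentence of care is simply that the level-zero exceptional pole hypothesis is precisely the case $m=0$ of Theorem \ref{5.2}, and that ``$\pi$ ramified'' is the negation of ``$c(\pi)=0$'', so that the strict inequality $c(\pi)>0$ holds and no boundary case is lost.
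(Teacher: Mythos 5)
Your proposal is correct and follows exactly the paper's route: Corollary \ref{5.3} is the case $m=0$ of Theorem \ref{5.2}, using that ramification of $\pi$ is equivalent to $c(\pi)\neq 0$, hence $c(\pi)\geq 1>0$. Nothing further is needed.
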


As a non-zero complex polynomial has a finite number of roots, the following result must hold.

\begin{corollary}

    The family $\mathcal{J}(\pi)$ can have at most finitely many exceptional poles of level $m$ for $m<c(\pi)$. In particular,  if $\pi$ is ramified, the family $\mathcal{J}(\pi)$ can have at most finitely many exceptional poles of level $0$.
\end{corollary}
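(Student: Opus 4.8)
The plan is to obtain this as an immediate consequence of Theorem \ref{5.2} together with the fact that $p_\pi(s)$ is a non-zero polynomial. First I would fix an integer $m$ with $m < c(\pi)$ and observe that, by Theorem \ref{5.2}, every complex number $s_0$ at which the family $\mathcal{J}(\pi)$ has an exceptional pole of level $m$ must satisfy $p_\pi(s_0) = 0$. In other words, the set of such $s_0$ is contained in the zero locus of $p_\pi$.

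Next I would recall from the construction preceding Theorem \ref{5.1} that $p_\pi$ is defined as a quotient of $L$-factors, namely $L(s,\pi_{ur},\mathrm{As})/L(s,\pi,\mathrm{As})$ in the inert case and $L(s,{\pi_1}_{ur}\times{\pi_2}_{ur})/L(s,\pi,\mathrm{As})$ in the split case, and that this quotient is a non-zero polynomial in $s$ by Proposition $4.2$ of \cite{py21}. Since a non-zero polynomial has only finitely many roots, the zero locus of $p_\pi$ is finite, and hence so is the set of exceptional poles of level $m$ for $\mathcal{J}(\pi)$; this establishes the first assertion.

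For the ``in particular'' clause, I would invoke the equivalence recalled just before Corollary \ref{5.3} — the archimedean analogue of the non-archimedean conductor story — that $\pi$ is ramified precisely when $c(\pi) > 0$. Thus, when $\pi$ is ramified, we may apply the first part with $m = 0 < c(\pi)$ and conclude that $\mathcal{J}(\pi)$ admits at most finitely many exceptional poles of level $0$.

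There is no serious obstacle here: the statement is a direct corollary of Theorem \ref{5.2} and elementary algebra. The only point requiring a moment's care is that ``exceptional pole of level $m$'' is a property attached to an individual point $s_0 \in \mathbb{C}$ (through the Laurent expansion of the integrals at $s_0$ and the vanishing behaviour of the bilinear form $B_{s_0}$ on the filtration $\mathcal{S}^{m+1}$, as recalled in the introduction), so that the collection of such points is genuinely a subset of $\mathbb{C}$ to which the finiteness of the root set of $p_\pi$ applies.
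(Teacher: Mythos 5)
Your argument is correct and is exactly the paper's (implicit) proof: Theorem \ref{5.2} places every exceptional pole of level $m<c(\pi)$ among the zeros of the non-zero polynomial $p_\pi$, which are finitely many, and the ``in particular'' clause follows since $\pi$ ramified means $c(\pi)>0$. No differences worth noting.
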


In the inert case, we expect Theorems \ref{1.1} and \ref{5.2} to hold true, not only for nearly tempered representations but also for any irreducible generic representation. Nevertheless, it is crucial to establish the non-vanishing in Theorem \ref{3.1}$(3)$ for the broader category of irreducible generic representations to support this claim.

\section*{Acknowledgements}  I would like to thank my advisor Professor Ravi Raghunathan for his constant encouragement, invaluable comments, and for carefully reading several preliminary versions of this paper. I also wish to thank Professor Dipendra Prasad and Professor U.K. Anandavardhanan for several useful remarks.



\end{document}